\theoremstyle{plain}
\newtheorem{theorem}{Theorem}
\newtheorem{corollary}[theorem]{Corollary}
\newtheorem{proposition}{Proposition}
\newtheorem{lemma}{Lemma}
\theoremstyle{definition}
\newtheorem{conjecture}{Conjecture}
\newtheorem{definition}{Definition}
\newtheorem{example}{Example}
\newtheorem{question}{Question}
\newtheorem{claim}{Claim}
\title{Heged\"{u}s' Conjecture and Tighter Upper Bounds for Equidistant Codes in Hamming Spaces}
\author{Sihuang Hu, 
        Hexiang Huang and Wei-Hsuan Yu }
\date{}
\begin{document}

\maketitle
\begin{abstract}
An equidistant code is a code in the Hamming space such that two distinct codewords have the same Hamming distance. This paper investigates the bounds for equidistant codes in Hamming spaces. We prove a refined version of Heged\"{u}s’ conjecture about the size of equidistant codes by adjusting the assumption on the Hamming distance \( d \), establishing a new upper bound. Specifically, for any integer \( q \geq 2 \), we demonstrate that the size of \( q \)-ary equidistant codes of length \( n \) is at most \( (q-1)n \), provided \( d \neq \frac{(q-1)n+1}{q} \). Additionally, we extend Deza's theorem from the binary case to the $q$-ary case and then apply this generalization to derive tight upper bounds $\lfloor 2n/d\rfloor$ for sufficiently large $n$ (length of codewords). Amazingly, the upper bound is independent of $q$.
\end{abstract}

\section{Introduction}

Let \( q \geq 2 \) and \( n \) be positive integers. Denote \( [0, q-1] = \{0,1,\ldots,q-1\} \) and \( [n] = \{1,2,\ldots,n\} \). Let \( H_q^n = [0,q-1]^n \) denote the \( q \)-ary \textit{Hamming space}. For $x\in H_q^n$, the $i$-th coordinate of $x$ is denoted by $x(i)$. The \textit{support} of $x$ is
\[\mathrm{supp(x)} = \{i\in [n]:x(i)\neq 0\}.\] 
The \textit{Hamming weight} of a word \( x \in H_q^n \) is the cardinality of its support, namely, \( \mathrm{wt}(x)  = |\mathrm{supp}(x)|\). The \textit{Hamming distance} between two words \( x, y \in H_q^n \) is given by
\[
\mathrm{d_H}(x, y) = |\{i \in [n] : x(i) \neq y(i)\}|.
\]
It is well known that \( \mathrm{d_H} \) defines a metric on \( H_q^n \).  Define the \textit{Johnson space} $J^{n,k}_q$ to the words in $H_q^n$ of weight $k$, namely,
\[J^{n,k}_q = \{x\in H_q^n:\mathrm{wt}(x) = k\}.\]
When $q = 2$, we may drop the subscript and write $J^{n,k}$. Throughout this paper, we identify a subset in $[n]$ with its incidence vector in $\{0,1\}^n$. Thus we can view $J^{n,k}$ as the collection of all $k$-subsets of $[n]$, namely,
\[J^{n,k} = \binom{[n]}{k}.\]

In coding theory, two codes \(C_1\) and \(C_2\) over a finite alphabet are \textit{equivalent}, if one can be obtained from the other by:
\begin{enumerate}
    \item[(a)] A permutation of the coordinates (reordering the positions of symbols in each codeword), and/or
    \item[(b)] A permutation of the symbols within each coordinate (applying a consistent symbol substitution at each position).
\end{enumerate}

A code \( C \subseteq H_q^n \) is said to be an \textit{\( s \)-code} if the pairwise distances between distinct codewords take exactly \( s \) distinct values. 
Particularly, for \( s = 1 \), an \textit{equidistant code} is a code where the Hamming distances between any two distinct codewords are the same.

Equidistant codes play a fundamental role in coding theory and combinatorial geometry. They can be regarded as the Hamming-space analogues of geometric simplices in Euclidean space. 
In particular, binary equidistant codes, such as Hadamard codes (see Chapter 2 of \cite{MacWilliams1977}), are widely employed in communication systems and signal processing due to their efficient encoding and decoding properties, see \cite{Evangelaras2003ApplicationsOH}.

Understanding the maximum possible size of an equidistant code under constraints on distance and alphabet size is a fundamental problem in extremal coding theory. In \cite{Deza1973}, Deza showed that when a binary equidistant code has a sufficiently large cardinality, it must be a trivial code, known as a \textit{$\Delta$-system} or \textit{sunflower}. This phenomenon also appears in other contexts; see, for example, \cite{DezaFrankl1981}, \cite{EtzionRaviv2015}, and \cite{GorlaRavagnani2016}.

Significant progress has been made in the study of equidistant codes, please refer to \cite{FU2003157} and references therein. 
Etzion and Raviv (2015) explored equidistant codes in the Grassmannian, focusing on subspace distances and bounds \cite{EtzionRaviv2015}. Gorla and Ravagnani (2016) investigated the properties of equidistant subspace codes, providing theoretical insights into their structure \cite{GorlaRavagnani2016}. Deza and Frankl (1981) demonstrated that large sets of ternary equidistant vectors form a sunflower structure \cite{DezaFrankl1981}, while Zinoviev and Todorov (2007) constructed maximum $q$-ary equidistant codes for specific distances, such as $d=3$ and $4$ \cite{ZinovievTodorov2007}. These results lay a foundation for further exploration of $q$-ary equidistant codes in various applications.

A classical upper bound for \( s \)-codes in Hamming space was established by Delsarte in \cite{delsarte1973algebraic}, \cite{delsarte1975association}, and later refined by Babai, Snevily, and Wilson \cite{Babai_Snevily_Wilson_1995_A_new_proof} using the polynomial method.

\begin{theorem}[Delsarte \cite{delsarte1975association}, 1975]
    Let \( C \subseteq H_q^n \) be an \( s \)-code. Then
    \[
    |C| \leq \sum_{i=0}^s \binom{n}{i}(q-1)^i.
    \]
\end{theorem}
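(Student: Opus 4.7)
The plan is to follow the Babai--Snevily--Wilson polynomial-method framework, which re-derives Delsarte's linear-programming bound by exhibiting $|C|$ linearly independent polynomials inside an explicit ambient space of dimension $\sum_{i=0}^{s}\binom{n}{i}(q-1)^{i}$.

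First I would set up the natural coordinate system on the space of functions $H_q^n\to\mathbb{R}$. For each coordinate $i\in[n]$ and each symbol $a\in[0,q-1]$ let $\chi_{i,a}\colon H_q^n\to\{0,1\}$ be the indicator $\chi_{i,a}(x)=1$ iff $x(i)=a$. Using $\sum_{a=0}^{q-1}\chi_{i,a}=1$ inside each coordinate, the function
\[
d_H(c,x)=n-\sum_{i=1}^{n}\chi_{i,c(i)}(x)
\]
becomes an affine combination of the $\chi_{i,a}$. I would then define the ambient vector space $V_s$ to be the span of all ``multilinear'' monomials $\prod_{i\in S}\chi_{i,a_i}$ with $S\subseteq[n]$, $|S|\leq s$, and $a_i\in[0,q-1]$. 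A key combinatorial step is to compute $\dim V_s$: the relations $\chi_{i,a}\chi_{i,b}=\delta_{a,b}\chi_{i,a}$ force monomials to be multilinear across coordinates, and within each coordinate the relation $\chi_{i,0}=1-\sum_{a\geq 1}\chi_{i,a}$ lets us restrict to $a_i\in[1,q-1]$, yielding the basis count $\dim V_s=\sum_{i=0}^{s}\binom{n}{i}(q-1)^{i}$.

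Next, let the $s$ distinct pairwise distances be $d_1,\dots,d_s$. For every codeword $c\in C$ define the Babai--Snevily--Wilson polynomial
\[
P_c(x)=\prod_{j=1}^{s}\bigl(d_H(c,x)-d_j\bigr).
\]
Since $d_H(c,x)$ is a degree-$1$ element of $V_1$, the product $P_c$ lies in $V_s$. The linear independence of $\{P_c\}_{c\in C}$ follows from a standard triangular evaluation argument: for $c\neq c'$ in $C$ the distance $d_H(c,c')$ equals some $d_j$, so $P_c(c')=0$, while $P_c(c)=\prod_{j=1}^{s}(-d_j)\neq 0$. Assuming $\sum_{c}\lambda_c P_c\equiv 0$ and evaluating at each $c\in C$ forces all $\lambda_c=0$.

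Combining the two ingredients, $|C|\leq \dim V_s=\sum_{i=0}^{s}\binom{n}{i}(q-1)^{i}$. The main technical obstacle I anticipate is the dimension count for $V_s$: one must verify carefully that the stated monomial set is not only spanning but also linearly independent after imposing the coordinatewise identities, which is most cleanly done by an inductive reduction on $n$ or by exhibiting a dual evaluation pairing that separates the monomials. The edge case $P_c(c)\neq 0$ also requires a brief remark that each $d_j\geq 1$, which holds since we only count distances between distinct codewords.
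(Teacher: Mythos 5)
Your proof is correct, and it is precisely the Babai--Snevily--Wilson polynomial-method argument that the paper itself points to (the paper states this theorem without proof, citing Delsarte and remarking that it was later reproved by Babai, Snevily, and Wilson): the reduction of $d_H(c,x)$ to an affine combination of the indicators $\chi_{i,a}$, the polynomials $P_c(x)=\prod_{j=1}^{s}(d_H(c,x)-d_j)$, and the diagonal evaluation argument are all exactly right. One simplification worth noting: the linear independence of the reduced monomial set, which you flag as the main technical obstacle, is not actually needed --- for the upper bound it suffices that the monomials $\prod_{i\in S}\chi_{i,a_i}$ with $|S|\le s$ and $a_i\in[1,q-1]$ \emph{span} $V_s$, since then $|C|\le\dim V_s\le\sum_{i=0}^{s}\binom{n}{i}(q-1)^{i}$ follows from the independence of the $P_c$ alone.
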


For \( s = 1 \), we obtain the following.

\begin{corollary}
\label{corollar_upper_bound_for_q_ary_case}
    Let \( C \subseteq H_q^n \) be an equidistant code. Then
    \[
    |C| \leq n(q-1) + 1.
    \]
\end{corollary}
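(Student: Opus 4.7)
The statement is an immediate specialization of Delsarte's theorem stated just above, so the proof plan is essentially a one-line substitution rather than a genuine argument. The plan is to observe that an equidistant code is, by definition, a $1$-code: all pairwise Hamming distances take a single value, so the number of distinct distances is $s=1$. Hence the hypothesis of Delsarte's theorem is satisfied with $s=1$.

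Next, I would substitute $s=1$ into the bound $|C| \leq \sum_{i=0}^{s} \binom{n}{i}(q-1)^i$. The sum collapses to the two terms $i=0$ and $i=1$, giving
\[
|C| \;\leq\; \binom{n}{0}(q-1)^0 + \binom{n}{1}(q-1)^1 \;=\; 1 + n(q-1),
\]
which is exactly the claimed bound. No further estimation or combinatorial argument is needed.

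There is no real obstacle here: the corollary is a direct numerical consequence of the preceding theorem. The only thing worth flagging is the consistency check that the definition of equidistant code given earlier (all pairwise distances equal) indeed coincides with the $s=1$ case of the definition of $s$-code (pairwise distances taking exactly $s$ distinct values), which is immediate from the definitions.
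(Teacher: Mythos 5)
Your proposal is correct and matches the paper exactly: the paper states this corollary as the immediate $s=1$ specialization of Delsarte's theorem ("For $s=1$, we obtain the following"), with the bound $\binom{n}{0}(q-1)^0+\binom{n}{1}(q-1)^1 = n(q-1)+1$ following by direct substitution. Your consistency check that an equidistant code is precisely a $1$-code is the only observation needed, and it is the same implicit step the paper relies on.
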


A recent improvement for the binary case was obtained by Heged\"{u}s in \cite{hegedus_2024_a_new_upper}, where a sharper bound was derived for codes with distance \( d \neq \frac{n+1}{2} \).

\begin{theorem}[Heged\"{u}s \cite{hegedus_2024_a_new_upper}, 2024]
    Let \( C \subseteq H_2^n \) be an equidistant code. If the code has distance \( d \neq \frac{n+1}{2} \), then
    \[
    |C| \leq n.
    \]
\end{theorem}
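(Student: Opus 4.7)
The plan is to convert the equidistant condition into a linear-algebra statement about a Gram matrix and then read off the improved bound from an eigenvalue computation, sharpening Delsarte's bound by one in the non-degenerate regime.

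First I will pass from $\{0,1\}^n$ to $\{-1,+1\}^n$ by sending each codeword $c \in C$ to the sign vector $u_c \in \mathbb{R}^n$ with $u_c(i) = (-1)^{c(i)}$. Under this map $\langle u_c, u_{c'}\rangle = n - 2\mathrm{d_H}(c,c')$, so writing $C = \{c_1,\ldots,c_m\}$ the Gram matrix $G$ of $u_{c_1},\ldots,u_{c_m}$ satisfies $G_{ii} = n$ and $G_{ij} = n - 2d$ for $i \neq j$. Equivalently,
\[ G = 2d \cdot I_m + (n-2d) \cdot J_m, \]
where $J_m$ denotes the $m \times m$ all-ones matrix. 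A direct diagonalization gives the spectrum of $G$: the eigenvalue $2d + m(n-2d)$ with multiplicity one (on the line spanned by the all-ones vector) and the eigenvalue $2d$ with multiplicity $m-1$ (on its orthogonal complement). Since the $u_{c_i}$ are vectors in $\mathbb{R}^n$, one has $\mathrm{rank}(G) \leq n$.

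Next I will split into cases according to whether the first eigenvalue vanishes. The degenerate cases $d = 0$ (which forces $m=1$) and $n = 2d$ (in which case $G = n\,I_m$) already yield $m \leq n$, so assume $d \geq 1$ and $n \neq 2d$. If $2d + m(n-2d) \neq 0$ then $G$ has full rank $m$, so $m \leq n$. Otherwise $G$ has rank $m-1$, which gives only $m \leq n+1$; but solving $2d + m(n-2d) = 0$ forces $m = 2d/(2d-n)$, and setting $m = n+1$ in this expression and clearing denominators yields precisely $d = (n+1)/2$. Consequently, whenever $d \neq (n+1)/2$, the degenerate case cannot realize $m = n+1$, and the desired bound $|C| \leq n$ follows.

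The argument has no real obstacle once the $\pm 1$ reformulation is introduced; the whole proof reduces to a two-eigenvalue computation plus the ambient-dimension rank inequality. The only point that merits a careful check is the algebraic identity showing that the equality $m = n+1$ in the rank-deficient case forces $d = (n+1)/2$, together with the separate verification of the small cases $d \in \{0, n/2\}$.
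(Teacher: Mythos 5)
Your proof is correct and is essentially the paper's own argument (the first proof of Theorem~\ref{main_theorem}) specialized to $q=2$: your $\pm 1$ sign-vector map is exactly the $q=2$ instance of the simplex embedding $\theta$ (up to the harmless normalization $1/\sqrt{n}$), your spectral analysis of $G = 2d\,I_m + (n-2d)J_m$ is the content of Lemma~\ref{lemma_determinant}, and the rank-versus-singularity dichotomy forcing $d=(n+1)/2$ when $m=n+1$ is the same final step. Your explicit treatment of the degenerate cases $d=0$ and $n=2d$ is a minor refinement that the paper's determinant formulation absorbs automatically, since there $\gamma = 2d/n \neq 0$ makes $\det(G)=0$ equivalent to the vanishing of the eigenvalue on the all-ones line regardless of whether $n-2d=0$.
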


Heged\"{u}s also proposed the following conjecture for the general \( q \)-ary case.

\begin{conjecture}
\label{conjecture}
    Let \( C \subseteq H_q^n \) be an equidistant code. Suppose the code has distance \( d \neq \frac{(q-1)(n+1)}{q} \). Then
    \[
    |C| \leq n(q-1).
    \]
\end{conjecture}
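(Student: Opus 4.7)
The plan is to port Heged\"{u}s's binary proof to the $q$-ary setting via a one-hot (indicator) embedding. For a codeword $c \in H_q^n$, I would define $\phi(c) \in \{0,1\}^{qn}$ by $\phi(c)_{i,j} = \mathbf{1}[c(i) = j]$, and center these vectors by setting $\psi(c) = \phi(c) - \tfrac{1}{q}\mathbf{1}$. Short computations give $\|\psi(c)\|^2 = \tfrac{(q-1)n}{q}$ and $\langle\psi(c),\psi(c')\rangle = \tfrac{(q-1)n}{q} - d$ for $c \neq c'$ in $C$. Moreover, since $\sum_{j=0}^{q-1}\psi(c)_{i,j}=0$ for every $i$, each $\psi(c)$ lies in a fixed $(q-1)n$-dimensional subspace $W$ of $\mathbb{R}^{qn}$.

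I would then analyze the Gram matrix $G=[\langle\psi(c),\psi(c')\rangle]_{c,c'\in C}$. Writing $\alpha=\tfrac{(q-1)n}{q}-d$, we have $G=\alpha J+dI$, with eigenvalue $d$ of multiplicity $|C|-1$ and eigenvalue $\alpha|C|+d$ of multiplicity $1$. Confinement of the $\psi(c)$ to $W$ forces $\mathrm{rank}(G)\le (q-1)n$, and (assuming $d>0$) this immediately yields $|C|\le (q-1)n+1$, with equality only when $\alpha|C|+d=0$. Substituting $|C|=(q-1)n+1$ into the latter identity pins down the critical distance as $d^*=\tfrac{(q-1)n+1}{q}$, so outside $d=d^*$ one already obtains the desired $|C|\le (q-1)n$.

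The main obstacle is that the conjecture excludes a different distance, $\tfrac{(q-1)(n+1)}{q}$, and the two quantities agree only for $q=2$. Thus for $q\ge 3$ one needs a genuinely extra argument in the narrow window $d=d^*\neq\tfrac{(q-1)(n+1)}{q}$ to rule out $|C|=(q-1)n+1$. The natural route is to exploit the rigid block structure of the one-hot embedding (each $\psi(c)$ has one entry equal to $\tfrac{q-1}{q}$ and $q-1$ entries equal to $-\tfrac{1}{q}$ in every block of $q$ coordinates), combined with divisibility and integrality consequences of having $(q-1)n+1$ vectors realize a regular simplex in $W$: the hope is that these force $d$ to equal precisely $\tfrac{(q-1)(n+1)}{q}$.

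I expect this step to be decisive and difficult, and also genuinely subtle: familiar extremal codes such as the ternary tetracode ($q=3$, $n=4$, $d=3$, $|C|=9$) attain $|C|=(q-1)n+1$ exactly at $d=d^*$, while $\tfrac{(q-1)(n+1)}{q}=10/3$ is not even an integer in that case. This suggests that the correct exceptional distance is $d^*$ rather than $\tfrac{(q-1)(n+1)}{q}$, in line with the refined statement announced in the abstract; if so, the Gram-matrix computation above is already sufficient, and the remaining work is to verify rigorously that the distance $d^*$ is the only one permitting the extremal simplex configuration.
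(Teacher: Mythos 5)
Your proposal is correct and takes essentially the same approach as the paper's first proof of Theorem~\ref{main_theorem}: your centered one-hot embedding $\psi$ is precisely the paper's simplex map $\theta$ up to normalization (the vectors $e_j - \tfrac{1}{q}\mathbf{1}$ form a regular $q$-simplex centered at the origin in a $(q-1)$-dimensional subspace), and your rank/eigenvalue analysis of the Gram matrix $G = \alpha J + dI$ is the same computation the paper carries out via Lemma~\ref{lemma_determinant}, pinning the exceptional distance at $d^* = \frac{(q-1)n+1}{q}$. Your diagnosis of the statement is also exactly the paper's own: the authors explicitly replace the conjectured exceptional distance with $d \neq \frac{(q-1)n+1}{q}$ and prove that refined version, and their Example~\ref{example_equidistant_code_maximum_size} (of which your tetracode $q=3$, $n=4$, $d=3$, $|C|=9$ is the case $k=2$) confirms that codes of size $(q-1)n+1$ occur precisely at $d = d^*$, so no extra argument in your ``narrow window'' is needed or possible.
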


Heged\"{u}s proposed $d\neq \frac{(q-1)n}{q}$ in Conjecture \ref{conjecture}, but we identify that the correct condition should be 
$d\neq \frac{(q-1)n+1}{q}$, as justified by our proofs in Section \ref{section_proof_of_Hegedus_conjecture}. A partial result toward this conjecture was proved by Barg and Musin in \cite{barga_musin_2011_bounds}.

\begin{theorem}[Barg and Musin \cite{barga_musin_2011_bounds}, 2011]
    Let \( C \subseteq H_q^n \) be an equidistant code. If the code has distance \( d \leq \frac{(q-1)n}{q} \), then
    \[
    |C| \leq n(q-1).
    \]
\end{theorem}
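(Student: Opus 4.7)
The plan is to reduce the bound to a linear-algebra dimension count by embedding $H_q^n$ into a Euclidean space via the standard ``simplex embedding,'' so that an equidistant code becomes a configuration of vectors of common squared norm and common pairwise inner product, with the hypothesis $d\le (q-1)n/q$ ensuring that this inner product is nonnegative.

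Concretely, for each symbol $a\in[0,q-1]$ I set $\psi(a)=e_a-\frac{1}{q}\mathbf{1}\in\mathbb{R}^q$, a vector in the sum-zero hyperplane, which satisfies $\langle\psi(a),\psi(b)\rangle=\delta_{ab}-1/q$. Extending coordinatewise gives a map $\phi:H_q^n\to\mathbb{R}^{qn}$ defined by $\phi(x)=(\psi(x(1)),\ldots,\psi(x(n)))$. A direct computation then yields $\|\phi(x)\|^2=n(q-1)/q$ for every $x$, and
\[
\langle\phi(x),\phi(y)\rangle \;=\; \frac{n(q-1)}{q}-\mathrm{d_H}(x,y).
\]

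Now suppose $C$ is an equidistant code of distance $d$ and write $c=n(q-1)/q$, $\alpha=c-d$. The vectors $\{\phi(x):x\in C\}$ all have squared norm $c$ and pairwise inner product $\alpha$; the hypothesis $d\le(q-1)n/q$ is precisely $\alpha\ge 0$, while $d>0$ (codewords are distinct) gives $\alpha<c$. The Gram matrix of $\phi(C)$ equals $dI+\alpha J$, whose eigenvalues are $d$ (with multiplicity $|C|-1$) and $d+|C|\alpha$, both strictly positive. Hence the Gram matrix has full rank $|C|$, so the $\phi(x)$ are linearly independent in $\mathbb{R}^{qn}$. Because each of the $n$ blocks of $\phi(x)$ sums to zero, these vectors in fact lie in a subspace cut out by $n$ independent linear constraints, of dimension $qn-n=n(q-1)$, which gives $|C|\le n(q-1)$.

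The proof is essentially automatic once the right embedding is in hand; the only genuine subtlety is pinpointing where the hypothesis enters: $d\le(q-1)n/q$ is exactly what is needed so that $\alpha\ge 0$, which in turn keeps the eigenvalue $d+|C|\alpha$ strictly positive and forces the Gram matrix to have full rank. If instead $\alpha<0$ one only recovers a Plotkin-style estimate from $d+|C|\alpha\ge 0$, rather than the dimension bound $n(q-1)$. A minor bookkeeping point is the verification that the $n$ sum-zero constraints are linearly independent (they involve disjoint coordinate blocks), which gives the correct ambient dimension.
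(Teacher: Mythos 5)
Your proof is correct, but it is worth noting that the paper does not actually prove this statement: it quotes it as a known result of Barg and Musin, and instead proves the stronger Theorem \ref{main_theorem} (all $d\neq\frac{(q-1)n+1}{q}$) by a related but distinct mechanism. Both your argument and the paper's use the same underlying embedding: your $\psi(a)=e_a-\frac{1}{q}\mathbf{1}$ is, up to the normalization $\frac{1}{\sqrt{n}}$ and an isometry, the paper's map $\theta$ built from a regular $q$-point simplex, and both reduce equidistance to a Gram matrix of the form $\gamma I+\alpha J$. The difference is where the bound comes from. The paper takes Delsarte's bound $|C|\le n(q-1)+1$ (Corollary \ref{corollar_upper_bound_for_q_ary_case}) as input, assumes equality $|C|=(q-1)n+1$, and uses the determinant formula of Lemma \ref{lemma_determinant} (or, in the second proof, the centroid property of maximum spherical simplices) to show the Gram determinant can vanish only when $d=\frac{(q-1)n+1}{q}$; this excludes a single distance value and therefore also covers distances above $\frac{(q-1)n}{q}$. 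You instead exploit the hypothesis $d\le\frac{(q-1)n}{q}$ to force $\alpha\ge 0$, so that both eigenvalues $d$ and $d+|C|\alpha$ are strictly positive (using $d\ge 1$ since codewords are distinct), the vectors $\phi(x)$ are linearly independent, and the dimension count in the block-sum-zero subspace of dimension $qn-n=n(q-1)$ finishes; your observation that the $n$ constraints act on disjoint blocks correctly justifies their independence. What each approach buys: yours is self-contained, needing no Delsarte-type input because linear independence yields the dimension bound directly, but it genuinely requires nonnegative off-diagonal inner products --- exactly as you note, for $d>\frac{(q-1)n}{q}$ it degenerates into a Plotkin-style estimate --- whereas the paper's determinant-vanishing argument survives in that regime at the cost of invoking Delsarte's bound first.
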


In addition to proving a refined version of Heged\"{u}s' conjecture, we also establish upper bounds for equidistant codes that depend on the Hamming distance. 
The proofs utilize a theorem of Deza \cite{Deza1973}.

\begin{theorem}[Deza \cite{Deza1973}, 1973]
\label{Theorem_deza_Delta_system}
    An \((n, k, l)\)-family \(\mathcal{F}\) with cardinality greater than \(k^2 - k + 1\) is a \(\Delta(n, k, l)\)-system, and thus
    \[
    |\mathcal{F}| \leq \frac{n - l}{k - l}.
    \]
\end{theorem}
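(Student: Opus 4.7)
The plan is to separate the theorem into its structural component (that $\mathcal{F}$ is a $\Delta$-system) and its counting consequence (the bound on $|\mathcal{F}|$). The latter is direct: if $\mathcal{F}$ is a sunflower with core $C$ of size $l$, then the petals $\{A\setminus C : A\in \mathcal{F}\}$ are pairwise disjoint $(k-l)$-subsets of the $(n-l)$-element set $[n]\setminus C$, so $|\mathcal{F}|(k-l)\le n-l$. Thus the substantive claim is the structural one, which is what I will target.

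My strategy for the structural claim is induction on $l$. The base case $l = 0$ is trivial, since pairwise intersection zero forces pairwise disjointness, a trivial sunflower with empty core. For the inductive step, it suffices to exhibit an element $x^* \in \bigcap_{A\in\mathcal{F}} A$: the reduced family $\mathcal{F}' = \{A\setminus\{x^*\} : A\in \mathcal{F}\}$ then consists of $(k-1)$-sets with pairwise intersection $l-1$, and since $k^2 - k + 1 \ge (k-1)^2 - (k-1) + 1$ for all $k \ge 1$, the inductive hypothesis applies to $\mathcal{F}'$; adjoining $x^*$ to its core recovers the sunflower structure of $\mathcal{F}$.

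The main obstacle is therefore to produce the common element $x^*$. My plan is to exploit the two double-counting identities on the degree sequence $d(x) = |\{A\in\mathcal{F} : x\in A\}|$, namely
\[
\sum_x d(x) = k|\mathcal{F}| \qquad \text{and} \qquad \sum_x \binom{d(x)}{2} = l \binom{|\mathcal{F}|}{2}.
\]
Assume for contradiction that no element has full degree $|\mathcal{F}|$. Fix $A \in \mathcal{F}$; the specialized identity $\sum_{x \in A} d(x) = k + l(|\mathcal{F}|-1)$ together with pigeonhole on $A$ produces an $x^* \in A$ of near-maximal degree. Picking any $B \in \mathcal{F}$ with $x^*\notin B$ and redistributing the incidences $\sum_{A' \ni x^*} |A' \cap B| = l\cdot d(x^*)$ across the $k$ elements of $B$ should yield further high-degree elements, and plugging these back into the second moment identity should force $|\mathcal{F}| \le k^2 - k + 1$, contradicting the hypothesis.

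The delicate aspect of this last step is that extremal configurations, most notably the Fano plane (with $k=3$, $l=1$, $|\mathcal{F}|=7 = k^2-k+1$), saturate the bound, so any slack in the pigeonhole or second moment estimates would fail. I expect the hardest part of the argument to be ruling out these projective-plane-like configurations where many sets coexist with empty common intersection and evenly distributed degrees.
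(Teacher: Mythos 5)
A preliminary remark on the comparison: the paper offers no proof of this statement at all --- it is quoted verbatim from Deza (1973) as a known result, and in Section 4 it is superseded by the sharper Deza--Frankl threshold $\max\{l+2,(k-l)^2+(k-l)+1\}$, also quoted without proof --- so your attempt must stand on its own. Its scaffolding is sound: the bound $|\mathcal{F}|\le \frac{n-l}{k-l}$ from disjoint petals is correct; the reduction by induction on $l$ is valid once a common element is produced (including the threshold check $k^2-k+1\ge (k-1)^2-(k-1)+1$, and the observation that it suffices to find one common element, since a pairwise intersection of size exactly $l$ containing an $l$-element common core must equal that core); and both double-counting identities, together with the pigeonhole conclusion that some $x^*\in A$ has degree $d(x^*)\ge 1+l(m-1)/k$ where $m=|\mathcal{F}|$, are correct.

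The genuine gap is the step you yourself flag as uncertain: ``redistributing the incidences \ldots should force $m\le k^2-k+1$.'' That is where the entire theorem lives, and the mechanism you sketch closes only for $l=1$. Note that, given your pigeonhole bound, what you need is: if $x^*$ lies outside some member $B$, then $d(x^*)\le l(k-1)+1$. For $l=1$ this follows cleanly, since the $d(x^*)$ sets through $x^*$ trace pairwise \emph{disjoint} singletons on $B$, giving $d(x^*)\le k=l(k-1)+1$ and hence $m\le k^2-k+1$. For $l\ge 2$, however, two sets through $x^*$ share exactly $l$ points of which only $x^*$ is guaranteed to avoid $B$, so their traces on $B$ may overlap in up to $l-1$ points, and the natural second-moment estimate degenerates: writing $e_y$ for the number of sets through $x^*$ containing $y\in B$, one has $\sum_{y\in B}e_y=l\,d(x^*)$ and $\sum_{y\in B}\binom{e_y}{2}\le (l-1)\binom{d(x^*)}{2}$, and convexity yields only
\[
d(x^*)\Bigl(\frac{l^2}{k}-l+1\Bigr)\le 1,
\]
whose left-hand coefficient is nonpositive whenever $l\ge 2$ and $k\ge l+2$, so no bound on $d(x^*)$ follows. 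Likewise, feeding a handful of high-degree points back into $\sum_x\binom{d(x)}{2}=l\binom{m}{2}$ only gives $d(x)=O(\sqrt{l}\,m)$, which contradicts nothing. So the core implication --- no common element forces $m\le k^2-k+1$ --- is unproven for all $l\ge 2$, and your closing paragraph about projective-plane-like configurations names exactly the obstruction without overcoming it; the known proofs of Deza's theorem get past it by analyzing set-level intersection patterns (e.g., showing the intersection of two members must be contained in every other member once $m$ is large) rather than single-point degree statistics.
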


This theorem can be readily applied to binary equidistant codes, yielding a distance-dependent bound. To derive a similar result for \(q\)-ary equidistant codes, we extend Deza's theorem from the binary case to the \(q\)-ary case.

The paper is structured as follows. In Section \ref{section_preliminaries_on_simplex}, we present preliminaries on equidistant codes in Euclidean space. In Section \ref{section_proof_of_Hegedus_conjecture}, we prove Conjecture \ref{conjecture} with a slight modification to the condition on \( d \). In Section \ref{section_generalization_of_Deza_theorem}, by extending Deza's theorem from the binary case to the \( q \)-ary case, we derive a tight upper bound for equidistant codes that is dependent on Hamming distance. Through this approach, we reveal an intriguing phenomenon: when the code length is sufficiently large, the optimal construction for \( q \)-ary equidistant codes aligns with that of the binary case. Finally, we propose several questions regarding the construction and bounds of equidistant codes in Section \ref{section_discussions}.

\section{Preliminaries on simplex}
\label{section_preliminaries_on_simplex}

Our proofs rely on a technique that maps the alphabet \( [0,q-1] \) to a simplex of size \( q \) in Euclidean space. In this section, we present some fundamental results on simplices in Euclidean space.

\begin{definition}
    A simplex is a set in Euclidean space  \( \mathbb{R}^n \) such that two distinct vectors have the same distance.
\end{definition}

A simplex in \( \mathbb{R}^n \) of maximum size \( n+1 \) is unique up to isometry and scalar scaling. Examples include the equilateral triangles in dimension \( 2 \) and the regular tetrahedron in dimension \( 3 \). A maximum simplex can be inscribed in the unit sphere, and when this occurs, its centroid coincides with the origin of \( \mathbb{R}^n \). In this case, the inner product between distinct points of the simplex in the unit sphere is \( -\frac{1}{n} \). In this section, we provide proofs of certain intermediate results on simplices to make this paper self-contained.

\begin{proposition}
\label{proposition_simplex_n+1}
    There exists a simplex of size \( n+1 \) in \( \mathbb{R}^n \).
\end{proposition}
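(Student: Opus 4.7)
The plan is to exhibit an explicit construction rather than argue abstractly. I would consider the $n+1$ standard basis vectors $e_1,\dots,e_{n+1}$ inside $\mathbb{R}^{n+1}$ and observe that for any $i \neq j$ one has $\|e_i-e_j\|^2 = 2$, so these $n+1$ points are pairwise equidistant. This already realizes a simplex of size $n+1$, but it lives in $\mathbb{R}^{n+1}$, not $\mathbb{R}^n$.

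The key reduction step is to note that every $e_i$ satisfies $\sum_{k=1}^{n+1}(e_i)_k = 1$, so all of the points lie on the affine hyperplane $H = \{x \in \mathbb{R}^{n+1} : x_1 + \cdots + x_{n+1} = 1\}$. I would then translate by the centroid $c = \tfrac{1}{n+1}(1,\dots,1)$: the translated points $e_i - c$ lie in the linear subspace $V = \{x : \sum_k x_k = 0\}$, which is $n$-dimensional. Choosing any orthonormal basis of $V$ identifies $V$ isometrically with $\mathbb{R}^n$, and since translation and isometry both preserve pairwise distances, the image is a set of $n+1$ equidistant points in $\mathbb{R}^n$.

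The argument has essentially no obstacle; the only thing to be careful about is making the transition from the ambient $\mathbb{R}^{n+1}$ down to $\mathbb{R}^n$ rigorous, which the translate-and-restrict argument handles cleanly. As a byproduct, the computation $\langle e_i - c,\, e_j - c\rangle = -\tfrac{1}{n+1}$ for $i\neq j$ together with $\|e_i - c\|^2 = \tfrac{n}{n+1}$ shows that after rescaling to the unit sphere the inner product becomes $-\tfrac{1}{n}$, matching the normalization asserted in the paragraph preceding the proposition; this is worth recording since the subsequent propositions appear to rely on it.
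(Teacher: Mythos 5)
Your proof is correct, but it takes a different route from the paper. The paper stays inside $\mathbb{R}^n$ throughout: it starts from the size-$n$ simplex $\{e_1,\dots,e_n\}$ and augments it with one extra point $v=\lambda(e_1+\cdots+e_n)$, solving the quadratic $n\lambda^2-2\lambda+1=2$ to get $\lambda=\frac{1\pm\sqrt{n+1}}{n}$; the whole argument is a one-line distance computation, at the cost of producing a simplex that is not centered or normalized in any special way. You instead use the classical embedding: take $e_1,\dots,e_{n+1}\subseteq\mathbb{R}^{n+1}$, observe they lie in the hyperplane $\sum_k x_k=1$, translate by the centroid $c=\frac{1}{n+1}(1,\dots,1)$ into the $n$-dimensional subspace $V=\{x:\sum_k x_k=0\}$, and identify $V$ with $\mathbb{R}^n$ via an orthonormal basis. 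This requires the extra (routine but necessary, and you handle it) step of justifying the isometric identification $V\cong\mathbb{R}^n$, but it buys more than the paper's construction does: your simplex comes out automatically centered at the origin with $\|e_i-c\|^2=\frac{n}{n+1}$ and $\langle e_i-c,\,e_j-c\rangle=-\frac{1}{n+1}$ for $i\neq j$, so after rescaling you directly recover the inner product $-\frac{1}{n}$ on the unit sphere. In effect your single construction simultaneously establishes this proposition and instantiates the content of Propositions~\ref{proposition_simplex_on_the_unit_sphere} and~\ref{proposition_inner_product_of_simplex} for the maximum simplex, whereas the paper derives those facts separately (and in greater generality, for simplices of any size $m$) and then combines them. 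Both arguments are complete; yours is arguably the more standard construction, the paper's the more self-contained within a fixed dimension.
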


\begin{proof}
   We construct such a simplex as follows. Let \( e_i \in \mathbb{R}^n \) denote the standard basis vector with one in the \( i \)-th position and zeros elsewhere. The set \( \{e_1, e_2, \ldots, e_n\} \subseteq \mathbb{R}^n \) forms a simplex of size \( n \), with each pair of vectors separated by a distance of \( \sqrt{2} \).

    Define 
    \[
    v = \lambda (e_1 + e_2 + \cdots + e_n) \in \mathbb{R}^n
    \]
    for some \( \lambda \in \mathbb{R} \). The distance between \( v \) and each \( e_i \) is given by:
    \begin{align*}
    \|v - e_i\| &= \sqrt{\langle v - e_i, v - e_i \rangle} \\
    &= \sqrt{\|v\|^2 - 2\langle v, e_i \rangle + \|e_i\|^2} \\
    &= \sqrt{n\lambda^2 - 2\lambda + 1}.
    \end{align*}
    To ensure that \( \{v, e_1, e_2, \ldots, e_n\} \) forms a simplex, we require:
    \[
    n\lambda^2 - 2\lambda + 1 = 2.
    \]
    Solving this quadratic equation yields \( \lambda = \frac{1 \pm \sqrt{n + 1}}{n} \). Both values of \( \lambda \) result in a simplex of size \( n + 1 \).
\end{proof}

We denote the unit sphere in \( \mathbb{R}^n \) centered at the origin by \( \mathbb{S}^{n-1} \).

\begin{proposition}
\label{proposition_simplex_on_the_unit_sphere}
    A simplex in \( \mathbb{R}^n \) can be inscribed in \( \mathbb{S}^{n-1} \) by translating its centroid to the origin and applying an appropriate scaling factor.
\end{proposition}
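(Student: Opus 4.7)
The plan is to take an arbitrary simplex $S = \{v_1, \ldots, v_m\} \subseteq \mathbb{R}^n$ with common pairwise distance $d > 0$, translate by its centroid $c = \frac{1}{m}\sum_{i=1}^m v_i$ to produce new points $w_i = v_i - c$, and then show that all $w_i$ share a common Euclidean norm $r$, so that the rescaled set $\{w_i/r\}$ sits on $\mathbb{S}^{n-1}$. Translation is an isometry, so $\{w_i\}$ remains an equidistant set with distance $d$, and by construction $\sum_{i=1}^m w_i = 0$.

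The key step is to establish that $\|w_i\|$ is independent of $i$. I would expand $\|w_i - w_j\|^2 = \|w_i\|^2 + \|w_j\|^2 - 2\langle w_i, w_j\rangle = d^2$ and sum over $j \neq i$. The centroid condition gives $\sum_{j \neq i} w_j = -w_i$, so the cross term evaluates to $-2\langle w_i, -w_i\rangle = 2\|w_i\|^2$. Collecting terms yields a relation of the form $m\|w_i\|^2 + \sum_{j=1}^m \|w_j\|^2 = (m-1)d^2$, whose right-hand side is independent of $i$. This forces $\|w_i\|^2$ to be constant, and solving the resulting equation gives the explicit value $r^2 = \frac{(m-1)d^2}{2m}$.

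Finally, dividing each $w_i$ by $r$ preserves the equidistant property (scaling is a similarity) and places every point on $\mathbb{S}^{n-1}$, completing the inscription. I do not foresee a real obstacle here: the entire argument reduces to a short inner-product computation, and the only subtlety is remembering to invoke the centroid identity $\sum_i w_i = 0$ to collapse the cross term after averaging the squared-distance equations.
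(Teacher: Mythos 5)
Your proposal is correct and takes essentially the same approach as the paper: both arguments show that the centroid is equidistant from all vertices, with the same radius \( r^2 = \frac{(m-1)d^2}{2m} \), and then translate the centroid to the origin and rescale. The only difference is bookkeeping in that one computation: the paper expands \( \bigl\| \bar{v} - v_j \bigr\|^2 \) directly, evaluating the cross inner products \( \langle v_{i_1} - v_j, v_{i_2} - v_j \rangle = \frac{d^2}{2} \), whereas you sum the equations \( \|w_i - w_j\|^2 = d^2 \) over \( j \neq i \) and use the centroid identity \( \sum_j w_j = 0 \) to collapse the cross term — a cosmetic variation of the same inner-product argument.
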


\begin{proof}
    Assume \( S = \{v_1, v_2, \ldots, v_m\} \subseteq \mathbb{R}^n \) is a simplex with distance \( d \). Let 
    \[
    \bar{v} = \frac{1}{m} \sum_{i = 1}^{m} v_i
    \]
    be its centroid. We compute:
    \begin{align*}
        \|\bar{v} - v_j\|^2 &= \left\| \frac{1}{m} \sum_{i\neq j}(v_i - v_j) \right\|^2 \\
        &= \frac{1}{m^2} \left( \sum_{i\neq j} \|v_i - v_j\|^2 + \sum_{\substack{i_1, i_2 \neq j \\ i_1 \neq i_2}} \langle v_{i_1} - v_j, v_{i_2} - v_j \rangle \right) \\
        &= \frac{1}{m^2} \left( (m-1)d^2 + (m-1)(m-2) \frac{d^2}{2} \right) = \frac{d^2(m-1)}{2m}.
    \end{align*}
    Since the distance between \( \bar{v} \) and each \( v_j \) is the same, all points of \( S \) lie on a sphere centered at \( \bar{v} \). Translating by \( -\bar{v} \) centers the simplex at the origin, and applying an appropriate scaling factor ensures that all points lie on \( \mathbb{S}^{n-1} \).
\end{proof}

Applying Proposition \ref{proposition_simplex_on_the_unit_sphere} to the construction in Proposition \ref{proposition_simplex_n+1}, we obtain a simplex of size \( n+1 \) in \( \mathbb{S}^{n-1} \).

\begin{proposition}
\label{proposition_inner_product_of_simplex}
    Let \( S \subseteq \mathbb{S}^{n-1} \) be a simplex of size \( m \) centered at the origin. Then the inner product \( \lambda \) between distinct vectors in \( S \) is given by
    \[
    \lambda = - \frac{1}{m-1}.
    \]
\end{proposition}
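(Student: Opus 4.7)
The plan is to exploit the two hypotheses on $S$ directly: all vectors have unit norm (they lie on $\mathbb{S}^{n-1}$), and their centroid is the origin, so $\sum_{i=1}^{m} v_i = 0$. From these two facts together with the equidistance assumption (which, for unit vectors, is equivalent to a single common inner product $\lambda$ between distinct pairs), the value of $\lambda$ is forced.

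Concretely, I would write $S = \{v_1, \ldots, v_m\}$ and take the squared norm of the sum: since $\sum_i v_i = 0$, we have
\[
0 = \Bigl\| \sum_{i=1}^m v_i \Bigr\|^2 = \sum_{i=1}^m \|v_i\|^2 + \sum_{i \neq j} \langle v_i, v_j \rangle = m + m(m-1)\lambda,
\]
and solving yields $\lambda = -\frac{1}{m-1}$.

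The only small preliminary step is to justify that equidistance of unit vectors means a common inner product. This is immediate from the identity $\|v_i - v_j\|^2 = 2 - 2\langle v_i, v_j\rangle$: if the left-hand side is the same constant $d^2$ for all $i \neq j$, then $\langle v_i, v_j \rangle = 1 - d^2/2$ is independent of the pair. I would open the proof with this one-line observation and then perform the expansion above.

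There is no real obstacle here; the argument is a direct computation once the right identity ($\sum v_i = 0$) is applied. The only thing to be careful about is not to confuse the dimension $n$ with the size $m$ of the simplex, since the answer depends only on $m$ and the constraint that the simplex is centered; the ambient dimension $n$ plays no role in the formula for $\lambda$.
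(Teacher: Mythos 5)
Your proposal is correct and matches the paper's proof essentially step for step: the same identity \( \|v_i - v_j\|^2 = 2 - 2\langle v_i, v_j\rangle \) to get a common inner product, and the same expansion of \( 0 = \left\| \sum_i v_i \right\|^2 = m + m(m-1)\lambda \). No differences worth noting.
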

\begin{proof}
    Assume \( S = \{v_1,v_2,\ldots,v_m\} \). The squared distance
    \[
    \|v_i - v_j\|^2 = 2 - 2\langle v_i, v_j \rangle.
    \]
    Since \( S \) is a simplex, the inner product between distinct vectors takes a single value \( \lambda \), so
    \[
    \langle v_i, v_j \rangle = 
    \begin{cases}
        1, & \text{if } i = j, \\
        \lambda, & \text{otherwise.}
    \end{cases}
    \]
    Since \( \sum_{i = 1}^{m} v_i = 0 \), we obtain
    \[
    \begin{split}
    0 &=\left\langle \sum_{i = 1}^{m} v_i, \sum_{i = 1}^{m} v_i \right\rangle\\
      &=  \sum_{i = 1}^{m} 1 + \sum_{i\neq j} \lambda \\
      &= m + m(m-1) \lambda.
    \end{split}
    \]
    It follows that
    \[
        \lambda = -\frac{1}{m-1}.
    \]
\end{proof}

\section{On Heged\"{u}s' conjecture}
\label{section_proof_of_Hegedus_conjecture}

In this section, we prove the following result.
\begin{theorem}
\label{main_theorem}
    Let \( C \subseteq H_q^n \) be an equidistant code. Assume the distance within the code \( C \) satisfies \( d \neq \frac{(q-1)n+1}{q} \). Then
    \[
    |C| \leq n(q-1).
    \]
\end{theorem}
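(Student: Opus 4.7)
The plan is to reduce the combinatorial bound to a rank estimate via the simplex embedding developed in Section \ref{section_preliminaries_on_simplex}. Fix unit vectors $v_0, v_1, \ldots, v_{q-1} \in \mathbb{S}^{q-2}$ with $\langle v_i, v_j \rangle = -\frac{1}{q-1}$ for $i \neq j$ (these exist by Propositions \ref{proposition_simplex_n+1}, \ref{proposition_simplex_on_the_unit_sphere}, \ref{proposition_inner_product_of_simplex}), and extend coordinatewise to a unit-norm embedding
\[
\Phi : H_q^n \to \mathbb{R}^{n(q-1)}, \qquad \Phi(x) = \tfrac{1}{\sqrt{n}}\bigl(v_{x(1)}, v_{x(2)}, \ldots, v_{x(n)}\bigr).
\]
Splitting $\langle \Phi(x),\Phi(y)\rangle = \frac{1}{n}\sum_{i=1}^n \langle v_{x(i)}, v_{y(i)}\rangle$ by whether $x(i)=y(i)$ shows that the inner product depends only on the Hamming distance:
\[
\langle \Phi(x),\Phi(y)\rangle = 1 - \frac{q\,\mathrm{d_H}(x,y)}{n(q-1)}.
\]

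Now let $C$ be equidistant of distance $d$, and set $m = |C|$ and $\alpha = 1 - \frac{qd}{n(q-1)}$. The Gram matrix of $\{\Phi(c):c\in C\}$ equals $(1-\alpha)I_m + \alpha J_m$, whose eigenvalues are $1-\alpha$ with multiplicity $m-1$ and $1+(m-1)\alpha$ simple. Because it is the Gram matrix of vectors in $\mathbb{R}^{n(q-1)}$, its rank is at most $n(q-1)$. Since $d>0$ forces $1-\alpha \neq 0$, the rank equals $m$ unless the simple eigenvalue vanishes, i.e.\ unless $\alpha = -\frac{1}{m-1}$.

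Thus $m \le n(q-1)$ outright, except in the degenerate case $\alpha = -\frac{1}{m-1}$, where rank $=m-1$ yields only $m \le n(q-1)+1$. If this degenerate case occurs with $m = n(q-1)+1$, substituting $\alpha = -\frac{1}{n(q-1)}$ into $\alpha = 1 - \frac{qd}{n(q-1)}$ gives exactly $d = \frac{(q-1)n+1}{q}$. Contrapositively, whenever $d \neq \frac{(q-1)n+1}{q}$, we conclude $|C| \le n(q-1)$. The only point that requires careful bookkeeping is matching the exceptional parameter produced by the rank argument with the distance excluded in the hypothesis; this very calculation is what identifies the correct form of Heged\"{u}s' condition as $\frac{(q-1)n+1}{q}$ rather than $\frac{(q-1)n}{q}$ or $\frac{(q-1)(n+1)}{q}$.
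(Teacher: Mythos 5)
Your proposal is correct and follows essentially the same route as the paper's first proof of Theorem \ref{main_theorem}: the identical coordinatewise simplex embedding into $\mathbb{S}^{(q-1)n-1}$, the Gram matrix of the form $(1-\alpha)I_m+\alpha J_m$, and the rank bound $\operatorname{rank} \leq n(q-1)$, with the excluded distance $d=\frac{(q-1)n+1}{q}$ emerging from the same vanishing of $1+(m-1)\alpha$. The only cosmetic differences are that you read off the spectrum of $(1-\alpha)I_m+\alpha J_m$ directly where the paper invokes its determinant Lemma \ref{lemma_determinant}, and your case analysis recovers the bound $m\le n(q-1)+1$ internally rather than quoting Corollary \ref{corollar_upper_bound_for_q_ary_case}.
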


The key idea is to map a code onto the unit sphere while preserving the information about code distance. 
Let $A_q = \{v_0, v_1, \ldots, v_{q-1}\} \subseteq \mathbb{S}^{q-2}$ be a simplex of size $q$ centered at the origin, where each $v_i$ is represented as a row vector. 
By Propositions~\ref{proposition_simplex_n+1}-\ref{proposition_simplex_on_the_unit_sphere}, such a simplex exists. We define the mapping
\begin{align*}
    \theta: H_q^n &\rightarrow \mathbb{S}^{(q-1)n-1}\\
    a = [a_1\ a_2\ \cdots\ a_n] &\mapsto \frac{1}{\sqrt{n}}[v_{a_1}\ v_{a_2}\ \cdots\ v_{a_n}]
\end{align*}
where each symbol \( a_i \in [0,q-1] \) and \( v_{a_i} \) is the corresponding vector in \( A_q \).
For each \( a \in H_q^n \), we have
\[
\langle \theta(a),\theta(a)\rangle = \frac{1}{n}\sum_{i = 1}^n\langle v_{a_i},v_{a_i}\rangle = 1.
\]
Hence the map \( \theta \) is well-defined as the image of each vector has unit length. 
For distinct \( a, b \in H_q^n \), the inner product between their images
\begin{align*}
    \langle \theta(a),\theta(b)\rangle &= \frac{1}{n}\sum_{i = 1}^n\langle v_{a_i},v_{b_i}\rangle\\
    &= \frac{1}{n}\left(\sum_{i: a_i=b_i} 1 + \sum_{i: a_i\neq b_i} \left(-\frac{1}{q-1}\right) \right) \quad &\text{(by Proposition \ref{proposition_inner_product_of_simplex})}\\ 
    &= \frac{1}{n}\left((n - \mathrm{d_H}(a, b)) + \mathrm{d_H}(a, b)\left(-\frac{1}{q-1}\right) \right)\\
    &= 1 - \frac{\mathrm{d_H}(a, b) q}{n(q-1)}.
\end{align*}
Thus, \( \theta \) preserves the information about code distance. Before giving our first proof, we need an elementary result on determinant.

\begin{lemma}
\label{lemma_determinant}
     Let \( J_m \) be the \( m\times m \) all-one matrix and \( I_m \) the \( m\times m \) identity matrix. Let \( \theta,\gamma\in \mathbb{R} \) be fixed real numbers. Then,
     \[
     \det(\gamma I_m+\theta J_m ) = (\gamma +m\theta)\gamma^{m-1}.
     \]
\end{lemma}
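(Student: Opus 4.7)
The plan is to compute the determinant by exploiting the spectral structure of the all-one matrix $J_m$. Since $J_m$ has rank one, its eigenvalues are easy to determine: the all-ones vector $\mathbf{1}\in\mathbb{R}^m$ satisfies $J_m \mathbf{1}=m\mathbf{1}$, so $m$ is an eigenvalue with eigenvector $\mathbf{1}$, while every vector orthogonal to $\mathbf{1}$ lies in the kernel of $J_m$ (because each row of $J_m$ equals $\mathbf{1}^{\top}$). Thus $J_m$ has eigenvalues $m$ (with multiplicity $1$) and $0$ (with multiplicity $m-1$).

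Since $\gamma I_m + \theta J_m$ commutes with $J_m$ and shares its eigenvectors, its eigenvalues are $\gamma + m\theta$ (with multiplicity $1$) and $\gamma$ (with multiplicity $m-1$). The determinant, being the product of the eigenvalues, is therefore $(\gamma+m\theta)\gamma^{m-1}$, as claimed.

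As an alternative avoiding eigenvalue machinery (and thus handling the case $\gamma=0$ uniformly), one can use elementary row operations: adding rows $2,\ldots,m$ to row $1$ replaces row $1$ with the constant row $(\gamma+m\theta)\cdot\mathbf{1}^{\top}$, and factoring out $\gamma+m\theta$ leaves a matrix whose first row is $\mathbf{1}^{\top}$; then subtracting $\theta$ times this new first row from each remaining row yields an upper-triangular-like matrix with entries $\gamma$ on the diagonal for rows $2,\ldots,m$, so the remaining determinant is $\gamma^{m-1}$. There is essentially no obstacle here; the only mild subtlety is to be precise about which eigenvector calculation (or which sequence of row operations) one prefers, and either route gives the formula immediately.
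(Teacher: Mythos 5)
Your proposal is correct and your main argument is essentially the paper's own proof: the paper likewise diagonalizes $J_m$ (eigenvalue $m$ on the span of the all-ones vector, eigenvalue $0$ on its orthogonal complement) so that $\gamma I_m+\theta J_m$ becomes $\operatorname{diag}(\gamma+m\theta,\gamma,\ldots,\gamma)$, and takes the determinant as the product of eigenvalues. Your supplementary row-operation argument is also valid (and handles all values of $\gamma$ and $\gamma+m\theta$ uniformly via multilinearity), but it is an optional extra rather than a different route.
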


\begin{proof}
    Note that \( I_m \) and \( J_m \) can be diagonalized under a common basis such that \( I_m \) remains unchanged and \( J_m \) becomes \( \operatorname{diag}(m,0,\ldots,0) \). Therefore, the matrix \( \theta J_m + \gamma I_m \) will be diagonalized under the same basis to be \( \operatorname{diag}(\gamma+m\theta, \gamma,\ldots,\gamma) \). Taking the determinants, we have
    \[
    \det(\gamma I_m+\theta J_m ) =\det(\operatorname{diag}(\gamma+m\theta, \gamma,\ldots,\gamma)) = (\gamma +m\theta)\gamma^{m-1}.
    \]
\end{proof}

\noindent\textbf{First proof of Theorem \ref{main_theorem}}. Assume \( C\subseteq H_q^n \) is an equidistant code of distance \( d \). Then \( |C|\leq n(q-1)+1 \) by Corollary \ref{corollar_upper_bound_for_q_ary_case}. Since the map \( \theta \) preserves the information about code distance, \( \theta(C) \) is a simplex in \( \mathbb{S}^{(q-1)n-1} \).  

Write \(m= |C| \) and \( \theta(C) = \{w_1,w_2,\ldots,w_m\} \), in which the vectors are viewed as rows. Define
\[
M = [w_1^T\ w_2^T\ \cdots\ w_m^T].
\]
Let \( N = M^TM \). Then \( \operatorname{rank}(N) \leq \operatorname{rank}(M) \leq (q-1)n \). Note that 
\begin{align*}
    N_{i,j} = 
    \begin{cases}
        1, & \text{if }i=j,\\
        1-\frac{dq}{n(q-1)}, &\text{otherwise}.
    \end{cases}
\end{align*}
Thus, we can write
\begin{align*}
    N &= I_m +(1-\frac{dq}{n(q-1)})(J_m-I_m) \\
    &= \frac{dq}{n(q-1)}I_m+(1-\frac{dq}{n(q-1)})J_m.
\end{align*}

Suppose \( m = (q-1)n+1 \). Then \( \det(N)=0 \), since otherwise \( N \) is of full rank and thus \( \operatorname{rank}(N) = m = (q-1)n+1 \leq (q-1)n \), which is a contradiction. By Lemma \ref{lemma_determinant}, we have
\[
\det(N) = \left(\frac{dq}{n(q-1)}+m(1-\frac{dq}{n(q-1)})\right)\left(\frac{dq}{n(q-1)}\right)^{m-1}=0.
\]
Thus,
\[
\frac{dq}{n(q-1)}+m(1-\frac{dq}{n(q-1)})=0.
\]
Taking \( m = (q-1)n+1 \), we solve 
\[
d= \frac{(q-1)n+1}{q}.
\]
It follows that if $d\neq \frac{(q-1)n+1}{q}$, then $|C|$ cannot equal $(q-1)n+1$, implying that $|C|\le (q-1)n$. \hfill\qed

The following proposition is required in the second proof of Theorem \ref{main_theorem}, which we state without proof.

\begin{proposition}
\label{proposition_centered_at_the_origin}
    Let \( A_{n+1} \subseteq \mathbb{S}^{n-1} \) be a simplex of size \( n+1 \). Then \( A_{n+1} \) is centered at the origin.
\end{proposition}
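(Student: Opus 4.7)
The plan is to reuse the Gram matrix technique already introduced in the first proof of Theorem~\ref{main_theorem}, together with Lemma~\ref{lemma_determinant}, to pin down the common inner product $\lambda$ between distinct points of $A_{n+1}$, and then compute that the centroid has zero norm.

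First I would write $A_{n+1}=\{v_1,\ldots,v_{n+1}\}\subseteq\mathbb{S}^{n-1}$ and use $\|v_i-v_j\|^2=2-2\langle v_i,v_j\rangle$ to conclude that, since $A_{n+1}$ is a simplex, there is a single value $\lambda\in\mathbb{R}$ such that $\langle v_i,v_j\rangle=\lambda$ for all $i\neq j$. Note $\lambda\neq 1$ because the $v_i$ are distinct unit vectors. The Gram matrix of the $v_i$ is then
\[
G=(1-\lambda)I_{n+1}+\lambda J_{n+1}.
\]
Since $G$ is the Gram matrix of $n+1$ vectors lying in the $n$-dimensional space $\mathbb{R}^n$, these vectors are linearly dependent, and so $\det(G)=0$.

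The main step is to combine this with Lemma~\ref{lemma_determinant} (applied with $\gamma=1-\lambda$, $\theta=\lambda$, $m=n+1$) to obtain
\[
0=\det(G)=\bigl((1-\lambda)+(n+1)\lambda\bigr)(1-\lambda)^{n}=(1+n\lambda)(1-\lambda)^{n}.
\]
Because $\lambda\neq 1$, this forces $\lambda=-\tfrac{1}{n}$, which is precisely the value identified in Proposition~\ref{proposition_inner_product_of_simplex}.

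Finally, let $c=\sum_{i=1}^{n+1}v_i$ be the centroid (up to the scalar $\tfrac{1}{n+1}$). Expanding,
\[
\|c\|^2=\sum_{i,j}\langle v_i,v_j\rangle=(n+1)+(n+1)n\lambda=(n+1)\bigl(1+n\lambda\bigr)=0,
\]
so $c=0$ and the simplex is centered at the origin. The only real obstacle is the step that extracts $\lambda=-1/n$ from the singularity of $G$; once that is in hand, the centroid calculation is immediate, and no new machinery beyond Lemma~\ref{lemma_determinant} is needed.
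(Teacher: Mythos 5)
Your proof is correct. Note, however, that the paper itself states Proposition \ref{proposition_centered_at_the_origin} explicitly \emph{without} proof (``which we state without proof''), so there is no internal argument to compare against: what you have written supplies the missing piece. Your route is squarely in the spirit of the paper's own toolkit --- it is, in essence, the Gram-matrix computation from the first proof of Theorem \ref{main_theorem} (there $N = M^T M$ has rank at most $(q-1)n$, forcing $\det N = 0$, and Lemma \ref{lemma_determinant} pins down the off-diagonal entry), specialized to $n+1$ unit vectors in $\mathbb{R}^n$, followed by the short centroid computation $\bigl\|\sum_i v_i\bigr\|^2 = (n+1)(1+n\lambda) = 0$. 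Each step checks out: $\lambda \neq 1$ because the points are distinct unit vectors, so the factor $(1-\lambda)^n$ cannot vanish; linear dependence of $n+1$ vectors in $\mathbb{R}^n$ gives $\det G = 0$; and the conclusion $\sum_i v_i = 0$ is precisely what ``centered at the origin'' means elsewhere in the paper (cf.\ Propositions \ref{proposition_simplex_on_the_unit_sphere} and \ref{proposition_inner_product_of_simplex}). Two remarks on how your argument sits in the paper's logical structure: first, it effectively proves the converse direction of Proposition \ref{proposition_inner_product_of_simplex} for maximum-size simplices (maximum size forces $\lambda = -\tfrac{1}{n}$, which in turn forces the centroid to the origin), rather than assuming centrality to compute $\lambda$; second, there is no circularity, since the second proof of Theorem \ref{main_theorem} invokes this proposition, while your proof depends only on Lemma \ref{lemma_determinant} and elementary rank considerations, neither of which relies on Theorem \ref{main_theorem}.
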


\noindent\textbf{Second proof of Theorem \ref{main_theorem}.} Assume \( C \subseteq H_q^n \) is an equidistant code of distance \( d \). Then \( |C| \leq n(q-1) + 1 \) by Corollary \ref{corollar_upper_bound_for_q_ary_case}. As in the first proof, we apply \( \theta \) to \( C \) and obtain a simplex \( \theta(C) \subseteq \mathbb{S}^{(q-1)n-1} \). If \( |C| = (q-1)n + 1 \), then by Proposition \ref{proposition_centered_at_the_origin}, \( \theta(C) \) is centered at the origin, and by Proposition \ref{proposition_inner_product_of_simplex}, it must hold that
\[
\frac{1}{n} \left( n - \frac{d q}{q-1} \right) = -\frac{1}{(q-1)n},
\]
from which we obtain \( d = \frac{n(q-1) + 1}{q} \). Hence, if \( d \neq \frac{n(q-1) + 1}{q} \), we have \( |C| \leq (q-1)n \). \hfill \(\qed\)

When \( d = \frac{n(q-1) + 1}{q} \), there exist equidistant codes \( C \subseteq H_q^n \) with cardinality achieving \( n(q-1) + 1 \) if \( q \) is a prime power, by the notion of finite geometry.

\begin{example}
\label{example_equidistant_code_maximum_size}
    Let \( q \) be a prime power and \( \mathbb{F}_q \) the finite field of order \( q \). Let \( k \) be a positive integer. Consider all \( \frac{q^k - 1}{q-1} \) one-dimensional vector subspaces in \( \mathbb{F}_q^k \). Choose one nonzero vector from each subspace, and define a matrix \( G \) with those chosen vectors as columns, say
    \[
    G = [v_1\ v_2\ \cdots\ v_{\frac{q^k - 1}{q-1}}].
    \]
    It can be verified that \( G \) generates an \( \left[ n = \frac{q^k - 1}{q-1}, k, d = q^{k-1} \right]_q \) equidistant linear code \( C \). 
    Note that \( d = q^{k-1} = \frac{n(q-1) + 1}{q} \), since \( n = \frac{q^k - 1}{q-1} \). Also, \( |C| = q^k = n(q-1) + 1 \). Thus, when \( d = \frac{n(q-1) + 1}{q} \), the bound \( |C| \leq n(q-1) + 1 \) is tight.
\end{example}

\section{Generalization of Deza's theorem}
\label{section_generalization_of_Deza_theorem}

 Let \( x \in H_2^n \) be a binary word. We identify $x$ with its support set \( \mathrm{supp}(x) \subseteq [n] \). For example, if \( A \subseteq [n] \), then \( A \subseteq x \) means \( A \subseteq \mathrm{supp}(x) \). For \( y \in H_2^n \), the intersection \( x \cap y \) represents the intersection \( \mathrm{supp}(x) \cap \mathrm{supp}(y) \), and \( |x \cap y| \) is defined as \( |\mathrm{supp}(x) \cap \mathrm{supp}(y)| \).

Let $l \leq k\le n$ be non-negative integers. If $A \subseteq [n]$ has cardinality $k$, we call it a \textit{$k$-subset} of $[n]$. An \textit{$(n, k, l)$-family} is a collection of $k$-subsets of $[n]$ such that the intersection of any two distinct sets has cardinality exactly $l$.

A family $\mathcal{F} = \{B_1, B_2, \ldots, B_m\}$ of sets is called a \textit{$\Delta$-system} of cardinality $m$ if there exists a set $D \subseteq B_i$ for $i = 1, 2, \ldots, m$, such that the sets $B_1 \backslash D, B_2 \backslash D, \ldots, B_m \backslash D$ are pairwise disjoint. The set $D$ is referred to as the \textit{kernel} of the $\Delta$-system. A \textit{$\Delta(n, k, l)$-system} is a $\Delta$-system consisting of $k$-subsets of $[n]$ with a kernel of size $l$.
It is clear that a $\Delta(n, k, l)$-system is an $(n, k, l)$-family. The maximum cardinality of a $\Delta(n, k, l)$-system is given by
\[
\left\lfloor \frac{n - l}{k - l} \right\rfloor,
\]
since the $k - l$ elements in each $B_i \backslash D$ must be disjoint across all sets in the family, and there are $n - l$ elements available in $[n] \backslash D$. 

In 1973, Deza derived the following result on $(n,k,l)$-family \cite{Deza1973}.

\begin{theorem}[Deza \cite{Deza1973}, 1973]
\label{Theorem_deza_Delta_system_orginal_version}
    An $(n, k, l)$-family $\mathcal{F}$ with cardinality greater than $k^2 - k + 1$ is a $\Delta(n,k,l)$-system, and thus
    \[
    |\mathcal{F}| \leq \frac{n - l}{k - l}.
    \]
\end{theorem}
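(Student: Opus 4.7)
The strategy is to prove the contrapositive: if $\mathcal{F}$ is not a $\Delta(n,k,l)$-system, then $|\mathcal{F}| \le k^2 - k + 1$. The conclusion $|\mathcal{F}| \le (n-l)/(k-l)$ is then automatic from the definition of a $\Delta$-system, since the $k - l$ petal elements of each member lie outside the fixed size-$l$ kernel and are pairwise disjoint across members, drawing from at most $n - l$ elements in total.

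The first step is to fix an arbitrary anchor $B_0 \in \mathcal{F}$ and partition the remaining members by their intersection with $B_0$: for each $l$-subset $L \subseteq B_0$, set
\[
\mathcal{G}_L = \{B \in \mathcal{F} \setminus \{B_0\} : B \cap B_0 = L\}.
\]
Any two members of $\mathcal{G}_L \cup \{B_0\}$ both contain $L$ and have $l$-element intersection, so $\mathcal{G}_L \cup \{B_0\}$ is itself a local $\Delta$-system with kernel $L$; writing its members as $L \cup C$, the petals $C$ are pairwise disjoint $(k - l)$-subsets of $[n] \setminus L$.

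The key step is a stability lemma: if $|\mathcal{G}_L| \ge k - l + 1$ for some $L$, then in fact $\mathcal{F} = \mathcal{G}_L \cup \{B_0\}$. Arguing by contradiction, suppose $B \in \mathcal{F} \setminus (\mathcal{G}_L \cup \{B_0\})$ exists. Since $|B \cap B_0| = l$ but $B \cap B_0 \ne L$, one cannot have $L \subseteq B$, so $j := |B \cap L| < l$. For each member $L \cup C_i$ of $\mathcal{G}_L \cup \{B_0\}$, the identity $|B \cap (L \cup C_i)| = l$ forces $|B \cap C_i| = l - j \ge 1$; as the petals $C_i$ are pairwise disjoint subsets of $B$, we obtain
\[
j + (|\mathcal{G}_L| + 1)(l - j) \le |B| = k.
\]
Rearranging yields $|\mathcal{G}_L| \le (k - l)/(l - j) \le k - l$, contradicting the hypothesis.

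Consequently, if $\mathcal{F}$ is not a sunflower, then $|\mathcal{G}_L| \le k - l$ for every $l$-subset $L$ of $B_0$, and summing over the $\binom{k}{l}$ choices yields $|\mathcal{F}| \le \binom{k}{l}(k - l) + 1$. This already settles the theorem when $l = 1$, since $\binom{k}{1}(k - 1) = k^2 - k$. The main obstacle is the case $l \ge 2$, where this crude pigeonhole overshoots $k^2 - k + 1$. To close the gap I would induct on $l$: by combining the stability lemma with a double count that varies the anchor $B_0$ and tracks element--set incidences, I would force some element $x \in [n]$ to lie in every member of $\mathcal{F}$; deleting $x$ then produces an $(n - 1, k - 1, l - 1)$-family of the same cardinality, and invoking the inductive hypothesis completes the argument.
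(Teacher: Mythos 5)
Your opening reductions are sound, and it is worth saying that the paper itself offers no proof of this statement (it is quoted from Deza's 1973 paper), so your attempt stands or falls on its own. The parts you actually prove are correct: partitioning $\mathcal{F}\setminus\{B_0\}$ by the trace $B\cap B_0$ does yield local sunflowers $\mathcal{G}_L\cup\{B_0\}$ (since $L\subseteq B\cap B'$ and $|B\cap B'|=l=|L|$ forces $B\cap B'=L$ for $B,B'\in\mathcal{G}_L$), and your stability lemma is a correct and standard computation: a set $B$ with $j=|B\cap L|<l$ must pick up $l-j\geq 1$ fresh points in each of the $|\mathcal{G}_L|+1$ pairwise disjoint petals, giving $j+(|\mathcal{G}_L|+1)(l-j)\leq k$ and hence $|\mathcal{G}_L|\leq k-l$. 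This genuinely settles the theorem for $l\leq 1$: the case $l=0$ is trivially a sunflower, and for $l=1$ your pigeonhole gives exactly $1+\binom{k}{1}(k-1)=k^2-k+1$, matching the stated threshold. The derivation of $|\mathcal{F}|\leq(n-l)/(k-l)$ from the sunflower structure is also fine.

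The genuine gap is the entire case $l\geq 2$, which is where the real content of Deza's theorem lies, and your proposal handles it only with an unproven plan. The inductive skeleton you describe is correct \emph{modulo} the key lemma: if some $x$ lies in every member, then deleting $x$ gives an $(n-1,k-1,l-1)$-family of the same cardinality, $(k-1)^2-(k-1)+1<k^2-k+1$, and induction lifts back with kernel enlarged by $x$. But the key lemma --- that $|\mathcal{F}|>k^2-k+1$ with $l\geq 1$ forces a common element --- is precisely the heart of the theorem, and ``a double count that varies the anchor $B_0$ and tracks element--set incidences'' is an aspiration, not an argument. It is not routine: the natural attempt (take $x\in B_0$ of degree at least $1+l(m-1)/k>k$, take $B'$ with $x\notin B'$, and count incidences of the traces $T_B=B\cap B'$ over the $t$ members $B\ni x$) breaks down for $l\geq 2$, because two members through $x$ may still share up to $l-1$ points of $B'$, so the traces are neither disjoint nor sufficiently spread for a convexity bound; carrying the computation through yields an inequality of the shape $t\bigl(l^2-kl+k\bigr)\leq k$, which is vacuous whenever $l^2-kl+k<0$ (e.g.\ $k=5$, $l=2$). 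Closing this requires Deza's actual argument, and the form of the Deza--Frankl refinement quoted as Theorem~\ref{Theorem_deza_Delta_system} in this paper --- whose threshold $\max\{l+2,(k-l)^2+(k-l)+1\}$ depends on $k-l$ rather than on $k$ --- is itself a hint that the correct counting is organized around the petal size $k-l$, not around the naive $\binom{k}{l}(k-l)+1$ pigeonhole, which as you note already overshoots $k^2-k+1$ for mid-range $l$ (e.g.\ $31>21$ at $k=5$, $l=2$). As it stands, your proposal proves the theorem only for $l\leq 1$.
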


Later, in \cite{DezaFrankl1981}, Deza and Frankl pointed out that a slight modification of the argument in \cite{Deza1973} gives the following better result.

\begin{theorem}
\label{Theorem_deza_Delta_system}
    If an $(n, k, l)$-family $\mathcal{F}$ has cardinality 
    \[|\mathcal{F}|>\max\{l+2,(k-l)^2+(k-l)+1\},\]
    then $\mathcal{F}$ is a $\Delta(n,k,l)$-system, and thus
    \[
    |\mathcal{F}| \leq \frac{n - l}{k - l}.
    \]
\end{theorem}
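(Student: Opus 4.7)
The plan is to reduce the theorem to a sunflower criterion and then to a counting bound.

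First I would establish the following key lemma: if there exists an $l$-set $D$ with $|\mathcal{F}_D| := |\{A \in \mathcal{F}: D \subseteq A\}| \geq k-l+2$, then $D \subseteq A$ for every $A \in \mathcal{F}$, so that $\mathcal{F}$ is automatically a $\Delta(n,k,l)$-system with kernel $D$. The argument: for $A,A' \in \mathcal{F}_D$ we have $D \subseteq A \cap A'$ and $|A \cap A'| = l = |D|$, forcing $A \cap A' = D$; hence the petals $\{A\setminus D : A \in \mathcal{F}_D\}$ are pairwise disjoint $(k-l)$-sets. If some $B \in \mathcal{F}$ satisfies $t := |B \cap D| < l$, then for every $A \in \mathcal{F}_D$ the identity $|A \cap B| = l$ forces $l-t \geq 1$ elements of $B$ into the petal $A \setminus D$; disjointness of the petals together with $|B \setminus D| = k - t$ yields $|\mathcal{F}_D|(l-t) \leq k-t$, hence $|\mathcal{F}_D| \leq (k-t)/(l-t) \leq k-l+1$ (the right-hand side being maximized at $t = l-1$), contradicting $|\mathcal{F}_D| \geq k-l+2$.

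Given the lemma, I would argue by contradiction: suppose $\mathcal{F}$ is not a $\Delta$-system, so that $|\mathcal{F}_L| \leq k-l+1$ for every $l$-subset $L$. Fix any $B \in \mathcal{F}$, and for each $A \in \mathcal{F} \setminus \{B\}$ set $L(A) := A \cap B$, an $l$-subset of $B$. The map $A \mapsto L(A)$ partitions $\mathcal{F} \setminus \{B\}$ into classes $\mathcal{B}(L) := \{A \neq B : A \cap B = L\}$ of size $|\mathcal{F}_L| - 1 \leq k - l$. Letting $s$ denote the number of nonempty classes, this gives $|\mathcal{F}| - 1 \leq s(k-l)$. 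Thus, to conclude $|\mathcal{F}| \leq (k-l)^2 + (k-l) + 1$ in the regime $k-l \geq 2$, it would suffice to show $s \leq k-l+1$.

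The main obstacle is establishing precisely this bound on $s$. The trivial estimate $s \leq \binom{k}{l}$ is far too weak, and small explicit examples show that $s$ can indeed approach $\binom{k}{l}$ under only the pairwise-intersection constraints. The refinement must therefore combine the per-class bound with an inter-class analysis exploiting that for $A_1 \in \mathcal{B}(L_1)$ and $A_2 \in \mathcal{B}(L_2)$ with $L_1 \neq L_2$, the requirement $|A_1 \cap A_2| = l$ together with $|A_1 \cap A_2 \cap B| = |L_1 \cap L_2|$ forces exactly $l - |L_1 \cap L_2|$ common elements of $A_1,A_2$ in $[n] \setminus B$; pushing these forced overlaps across many classes to their limit should yield the desired bound on $s$. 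Finally, the degenerate regime $k-l = 1$, where $(k-l)^2 + (k-l) + 1 = 3$ is too small, is covered by the separate $l+2$ term in the maximum, obtained by a direct structural analysis showing that any non-sunflower $(n,l+1,l)$-family embeds into a copy of $\binom{[l+2]}{l+1}$.
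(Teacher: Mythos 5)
First, a point of reference: the paper itself does not prove this theorem --- it is quoted from Deza--Frankl (1981) as a sharpening of Deza (1973) --- so there is no in-paper argument to compare against, and your proposal must stand on its own as a reconstruction of a published proof. The parts you carry out are correct: your key lemma (if $|\mathcal{F}_D|\ge k-l+2$ for an $l$-set $D$, then the petals over $D$ are pairwise disjoint and the count $|\mathcal{F}_D|(l-t)\le k-t$ with $t=|B\cap D|<l$ forces every $B\in\mathcal{F}$ to contain $D$) is sound, the per-class bound $|\mathcal{B}(L)|\le k-l$ and the resulting estimate $|\mathcal{F}|\le 1+s(k-l)$ are fine, and the $k-l=1$ regime is indeed handled by the standard dichotomy that an $(n,l+1,l)$-family is either a star or lies inside an $(l+2)$-set (you assert rather than prove this, but it is routine).

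The genuine gap is exactly where you flag it: the bound $s\le k-l+1$ on the number of distinct traces $A\cap B$ is never established, and it is the entire content of the theorem, not a refinement to be finessed. It is false as a local consequence of the hypotheses you retain. Take $k=4$, $l=2$, $B=\{1,2,3,4\}$, $A_1=\{1,2,a,b\}$, $A_2=\{1,3,a,d\}$, $A_3=\{2,3,b,d\}$, $A_4=\{1,4,b,d\}$ with $a,b,d\notin B$ distinct: all ten pairwise intersections have size exactly $2$, the family is not a $\Delta$-system, and the traces on $B$ --- indeed on \emph{any} member chosen as base --- are four distinct $2$-sets, so $s=4>k-l+1=3$. (This family has size $5\le 7=(k-l)^2+(k-l)+1$, so it does not contradict the theorem; it shows that any proof of $s\le k-l+1$ must genuinely exploit $|\mathcal{F}|>(k-l)^2+(k-l)+1$.) Your proposed inter-class analysis --- that $A_1\in\mathcal{B}(L_1)$ and $A_2\in\mathcal{B}(L_2)$ must share exactly $l-|L_1\cap L_2|\ge 1$ points outside $B$ --- does not by itself deliver the bound, since distinct classes may reuse the same points of $A_1\setminus B$, as the example above illustrates. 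This is precisely where Deza's actual argument does its real work, showing that a large non-sunflower $(n,k,l)$-family forces a projective-plane-like incidence structure of order $k-l$ (the extremal configurations being planes of order $k-l$ with $l-1$ common points adjoined), which is what caps the size at $(k-l)^2+(k-l)+1$. Since your writeup stops at ``pushing these forced overlaps \ldots should yield the desired bound,'' the proof is incomplete at its central step.
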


This result immediately implies the following upper bound for binary equidistant codes, which also appears in \cite{Deza1973} under a stricter condition on the cardinality of $C$. For binary case, we only concern equidistant codes whose distances are even, since an binary equidistant code with odd distance contains at most two codewords.

\begin{corollary}
\label{corollary_distant_dependent_bound_binary_case}
Let $C \subseteq H_2^n$ be an equidistant code with even distance $d$.
\begin{enumerate}
    \item[(a)] If $C$ contains the all-zero codeword and $|C| > \frac{d^2 + 2d + 8}{4}$, then $C \backslash \{0\}$ is a $\Delta(n, d, d/2)$-system.
    \item[(b)] We have 
    \begin{equation}
    \label{inequality_equidistant_code_upper_bound}
        |C| \leq \max\left\{\frac{d^2 + 2d + 8}{4},\left\lfloor \frac{2n}{d} \right\rfloor\right\}.
    \end{equation}
    Particularly, if $n \geq \frac{d(d^2 + 2d + 8)}{8}$, then
    \[
    |C| \leq \left\lfloor \frac{2n}{d} \right\rfloor.
    \]
\end{enumerate}
\end{corollary}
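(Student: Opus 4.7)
The plan is to reduce both parts to Deza's theorem (Theorem \ref{Theorem_deza_Delta_system}) by passing to the supports of a translate of $C$.

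For part (a), I would first observe that the assumption $0 \in C$ forces every nonzero codeword to have Hamming weight $d$, since $d_H(0,c) = \mathrm{wt}(c) = d$. For distinct nonzero $c_1, c_2 \in C$, the identity $|\mathrm{supp}(c_1) \triangle \mathrm{supp}(c_2)| = |\mathrm{supp}(c_1)| + |\mathrm{supp}(c_2)| - 2|\mathrm{supp}(c_1) \cap \mathrm{supp}(c_2)|$ combined with $d_H(c_1,c_2) = d$ gives $|\mathrm{supp}(c_1) \cap \mathrm{supp}(c_2)| = d/2$. Hence $\mathcal{F} = \{\mathrm{supp}(c) : c \in C \setminus \{0\}\}$ is an $(n,d,d/2)$-family. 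I would then check the Deza threshold with $k = d$, $l = d/2$: the quantity $\max\{l+2,\, (k-l)^2 + (k-l) + 1\}$ simplifies to $(d^2 + 2d + 4)/4$ (for $d \geq 2$ the quadratic term dominates). The hypothesis $|C| > (d^2 + 2d + 8)/4$ is exactly $|\mathcal{F}| = |C| - 1 > (d^2 + 2d + 4)/4$, so Theorem \ref{Theorem_deza_Delta_system} applies and $\mathcal{F}$ is a $\Delta(n,d,d/2)$-system.

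For part (b), I would first reduce to the case $0 \in C$ by replacing $C$ with $C \oplus c_0 = \{c \oplus c_0 : c \in C\}$ for any fixed $c_0 \in C$; this preserves all pairwise Hamming distances and the cardinality. If $|C| \leq (d^2 + 2d + 8)/4$, the bound (\ref{inequality_equidistant_code_upper_bound}) holds trivially. Otherwise part (a) applies, and the known maximum cardinality of a $\Delta(n,d,d/2)$-system yields
\[
|C| - 1 = |\mathcal{F}| \leq \left\lfloor \frac{n - d/2}{d/2} \right\rfloor = \left\lfloor \frac{2n}{d} \right\rfloor - 1,
\]
so $|C| \leq \lfloor 2n/d \rfloor$. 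For the final assertion, the hypothesis $n \geq d(d^2 + 2d + 8)/8$ rearranges to $2n/d \geq (d^2 + 2d + 8)/4$; since $d$ is even, $(d^2 + 2d + 8)/4$ is an integer, so $\lfloor 2n/d \rfloor \geq (d^2 + 2d + 8)/4$ and the maximum in (\ref{inequality_equidistant_code_upper_bound}) is attained by $\lfloor 2n/d \rfloor$.

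The argument is essentially bookkeeping once Theorem \ref{Theorem_deza_Delta_system} is in hand, so I do not expect a genuine obstacle. The only steps that deserve care are verifying the constant $(d^2 + 2d + 4)/4$ in the Deza threshold (so that the $+1$ from reinstating the zero codeword yields precisely the $(d^2 + 2d + 8)/4$ in the hypothesis), and confirming that $(d^2 + 2d + 8)/4$ is an integer when $d$ is even so the final floor comparison is clean.
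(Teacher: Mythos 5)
Your proposal is correct and follows essentially the same route as the paper's proof: pass to the supports of $C \setminus \{0\}$ (after a distance-preserving translation to put $0 \in C$), verify the threshold $\max\{l+2,(k-l)^2+(k-l)+1\} = (d^2+2d+4)/4$ with $k=d$, $l=d/2$, and invoke Theorem~\ref{Theorem_deza_Delta_system} together with the maximum size of a $\Delta(n,d,d/2)$-system. Your two extra bookkeeping checks (the explicit XOR translation and the integrality of $(d^2+2d+8)/4$ for even $d$ in the final floor comparison) are points the paper leaves implicit, but they do not change the argument.
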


\begin{proof}
\begin{enumerate}
    \item[(a)] Since $C$ contains the all-zero codeword, $C \backslash \{0\} \subseteq J_2^{n,d}$ is an $(n, d, d/2)$-family. Each codeword in $C \backslash \{0\}$ has weight $d$ (since the distance to the all-zero codeword is $d$), and the distance $d$ between any two distinct codewords implies an intersection size of $d/2$. If $|C| > \frac{d^2 + 2d + 8}{4}$, then $|C \backslash \{0\}| = |C| - 1 > \frac{d^2 + 2d + 4}{4} = \left(\frac{d}{2}\right)^2 + \frac{d}{2} + 1 \geq \frac{d}{2} + 2$. By Theorem \ref{Theorem_deza_Delta_system}, $C \backslash \{0\}$ is a $\Delta(n, d, d/2)$-system.
    
    \item[(b)] Without loss of generality, assume $C$ contains the all-zero codeword. If $|C| \le \frac{d^2 + 2d + 8}{4}$, then \eqref{inequality_equidistant_code_upper_bound} follows. If $|C| > \frac{d^2 + 2d + 8}{4}$, then by (a), $C \backslash \{0\}$ is a $\Delta(n, d, d/2)$-system. Thus,
    \[
    |C \backslash \{0\}| \leq \frac{n - d/2}{d - d/2} = \frac{2n}{d} - 1.
    \]
    Therefore, $|C| = |C \backslash \{0\}| + 1 \leq \frac{2n}{d}$. This proves \eqref{inequality_equidistant_code_upper_bound}.
    
    If $n \geq \frac{d(d^2 + 2d + 8)}{8}$, then $\frac{2n}{d} \geq \frac{d^2 + 2d + 8}{4}$. Thus, the bound simplifies to $|C| \leq \left\lfloor \frac{2n}{d} \right\rfloor$.
\end{enumerate}
\end{proof}

Next, we will extend Deza's theorem (Theorem \ref{Theorem_deza_Delta_system}) on $\Delta$-systems from the binary case to the $q$-ary case and then apply this generalization to $q$-ary equidistant codes.

Let $x, y \in J^{n,k}$. Regarding $x$ and $y$ as sets, we have
\[ |x \cap y| = k - \frac{\mathrm{d_H}(x,y)}{2}. \]
We extend this to $q$-ary Johnson spaces by defining the intersection size of two words $x, y \in J_q^{n,k}$ as
\[ |x \cap y|_q = k - \frac{\mathrm{d_H}(x,y)}{2}, \]
without explicitly specifying their intersection. Since $\mathrm{d_H}(x,y) \leq \mathrm{wt}(x) + \mathrm{wt}(y) = 2k$, the intersection size $|x \cap y|_q$ is always non-negative. 
However, $|x \cap y|_q$ may be a half-integer whenever $\mathrm{d_H}(x,y)$ is odd.

Assume $C\subseteq J_q^{n,k}$ is an equidistant code with distance $d$. Then by the above definition, $C$ is a code with a single intersection size  $l = k-d/2<k$. Thus, we also refer to the equidistant code \( C \) as an \textit{$(n,k,l)_q$-family}. Deza's theorem states that if an $(n,k,l)$-family $\mathcal{F}$ has large cardinality, then $\mathcal{F}$ is a $\Delta$-system. Below, we extend the concept of a \(\Delta\)-system from the binary case to the \(q\)-ary case.

\begin{definition}
A code \( C \subseteq J_q^{n,k} \) is a \textbf{\( \Delta_q(n, k, l) \)-system} with kernel \( D \subseteq [n] \), where \( |D| = l \), if 
\begin{enumerate}
    \item[(1)] for any two distinct codewords \( x, y \in C \) it holds that \( x(i) = y(i) \neq 0 \) for all \( i \in D \), and
    \item[(2)] the sets $\mathrm{supp}(x)\backslash D, x\in C$ are pairwise disjoint.
\end{enumerate}
Note that for \( q = 2 \), a \( \Delta_2(n, k, l) \)-system coincides with a \( \Delta(n, k, l) \)-system defined before.
\end{definition}

In Theorem \ref{theorem_q_ary_version_of_Deza_theorem}, we will prove that if the size of an $(n,k,l)_q$-family $\mathcal{F}$ is sufficiently large, it must form a $\Delta_q(n,k,l)$-system. The proof employs a technique that transforms a $q$-ary code into a binary code while preserving the intersection size information.

To transform a $q$-ary code into a binary code, each $q$-ary symbol is replaced by a suitably chosen binary vector of length $q$.
Define 
\begin{align*}
    \begin{split}
        w_0 &= [0, 0, \ldots, 0, 0],\\
        w_1 &= [1, 0, \ldots, 0, 1],\\
        w_2 &= [0, 1, \ldots, 0, 1],\\
        &\cdots\\
        w_{q-1} &= [0, 0, \ldots, 1, 1].
    \end{split}
\end{align*}
The set \( \mathcal{Q}_q = \{w_0, w_1, \ldots, w_{q-1}\} \) forms an equidistant code with distance 2 and satisfies the following properties:
\begin{enumerate}
    \item[(1)] \( w_i \neq 0\) if and only if \( i \neq 0 \), and
    \item[(2)] \( |w_i \cap w_j| = \begin{cases} 
    0, & \text{if } i = 0 \text{ or } j = 0, \\ 
    1, & \text{if } i, j \neq 0 \text{ and } i \neq j, \\ 
    2, & \text{if } i = j \neq 0.
    \end{cases} \)
\end{enumerate}

Define the mapping
\[
\psi: H_q^n \to H_2^{q n}, \quad a = [a(1), a(2), \cdots, a(n)] \mapsto [w_{a(1)}, w_{a(2)}, \cdots, w_{a(n)}].
\]
The map $\psi$ scales weight and distance as follows: $\mathrm{wt}(\psi(x)) = 2 \cdot \mathrm{wt}(x)$ and $\mathrm{d_H}(\psi(x), \psi(y)) = 2 \cdot \mathrm{d_H}(x, y)$. 
Consequently, $\psi$ transforms an $(n,k,l)_q$-family into a $(qn, 2k, 2l)$-family. 
We partition the coordinate set $[qn]$ into $n$ intervals as 
\[
[qn] = \bigcup_{i=1}^n I_i,
\]
where $I_i := [q(i-1)+1, qi]$ for $i = 1, 2, \ldots, n$. 

For a subset $S \subseteq [n]$, we define a projection that maps a subset $A \subseteq [n]$ to its intersection with $S$, given by
\[
\pi_S: 2^{[n]} \to 2^{[n]}, \quad A \mapsto A \cap S.
\]
This projection extends naturally to the binary Hamming space as follows:
\[
\pi_S: H_2^n \to H_2^n, \quad x \mapsto y,
\]
where 
\[ y(i) = 
\begin{cases}
x(i),& \text{ if $i\in S$},\\
0,& \text{ otherwise}.
\end{cases}
\]

\begin{theorem}
\label{theorem_q_ary_version_of_Deza_theorem}
Let $\mathcal{F}$ be an $(n,k,l)_q$-family, where $l = k - d/2$ and $0 < d \leq 2k$. If
\[
|\mathcal{F}| > \max\{2l + 2, 4(k - l)^2 + 2(k - l) + 1, q - 1\},
\]
then $l$ is an integer, $\mathcal{F}$ is a $\Delta_q(n, k, l)$-system, and
\[
|\mathcal{F}| \leq \frac{n - l}{k - l}.
\]
\end{theorem}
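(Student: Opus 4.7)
The plan is to apply the mapping $\psi$ defined just above in the excerpt to translate the problem into the binary setting, and then invoke Deza's theorem (Theorem~\ref{Theorem_deza_Delta_system}). Since $\psi$ is injective and scales both weight and distance by a factor of $2$, its image $\psi(\mathcal{F})$ is a $(qn, 2k, 2l)$-family in $J_2^{qn, 2k}$ (note that $2l = 2k - d$ is a genuine nonnegative integer, even if $l$ itself is only a half-integer at this stage). The hypothesis $|\mathcal{F}| > \max\{2l+2,\, 4(k-l)^2 + 2(k-l) + 1\}$ is exactly Deza's cardinality condition with parameters $(qn, 2k, 2l)$, so $\psi(\mathcal{F})$ must be a $\Delta(qn, 2k, 2l)$-system with some kernel $\hat{D} \subseteq [qn]$ of size $2l$.

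The heart of the argument is to extract a kernel $D \subseteq [n]$ for $\mathcal{F}$ from $\hat{D}$ by analyzing $\hat{D} \cap I_i$ block by block. Fix $i \in [n]$. Recall that coordinate $qi$ of $\psi(x)$ indicates whether $x(i) \neq 0$, and coordinate $q(i-1)+a$ indicates whether $x(i) = a$. Three pure cases are easy: if every codeword has $x(i) = 0$, block $I_i$ contributes nothing to $\hat{D}$; if every codeword has $x(i) = a$ for the same nonzero $a$, then $\{q(i-1)+a,\, qi\}$ lies in $\psi(x)$ for every $x$ and must belong to $\hat{D}$ (otherwise both coordinates appear in every $\psi(x) \setminus \hat{D}$, violating disjointness), so $\hat{D} \cap I_i = \{q(i-1)+a,\, qi\}$. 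For a mixed block, a short case check of the intersection patterns $\psi(x) \cap \psi(y) \cap I_i$ shows that the coordinate $qi$ cannot lie outside $\hat{D}$ and simultaneously be shared by two codewords with $x(i), y(i) \neq 0$. This leaves two sub-alternatives: either (i) all codewords have $x(i)$ nonzero with pairwise distinct symbols, or (ii) at most one codeword has $x(i) \neq 0$. Option (i) would require $|\mathcal{F}| \leq q-1$, which is precisely excluded by the third clause of the hypothesis, so only option (ii) survives, and such a block contributes nothing to $\hat{D}$.

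Setting $D = \{i \in [n] : \text{block } I_i \text{ is of the second pure type above}\}$, the block analysis yields $|\hat{D}| = 2|D|$, hence $|D| = l$ and in particular $l$ is an integer. By construction every $x \in \mathcal{F}$ satisfies $x(i) = a_i \neq 0$ for all $i \in D$, giving condition~(1) of a $\Delta_q(n,k,l)$-system. For condition~(2), any index $i \in (\mathrm{supp}(x) \cap \mathrm{supp}(y)) \setminus D$ with $x \neq y$ would force $I_i$ to be a mixed block with at least two codewords having nonzero entries at position $i$, contradicting option~(ii); hence the sets $\mathrm{supp}(x) \setminus D$ are pairwise disjoint. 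The bound $|\mathcal{F}| \leq (n-l)/(k-l)$ is then immediate, since each $x \in \mathcal{F}$ contributes $k - l$ elements to the $(n-l)$-element set $[n] \setminus D$.

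The main obstacle I anticipate is the mixed-block analysis: enumerating how $\psi(x)$ and $\psi(y)$ can overlap within $I_i$ and using both defining properties of a $\Delta$-system (common kernel plus disjoint residues) to pin down that the ``flag'' coordinate $qi$ cannot be shared outside $\hat{D}$. This is the unique place where the third condition $|\mathcal{F}| > q - 1$ is consumed, and the case split must be organized carefully enough that option~(i) collapses under this bound while option~(ii) leaves no trace in the kernel.
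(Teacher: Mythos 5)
Your proposal is correct and follows essentially the same route as the paper: push $\mathcal{F}$ through $\psi$ to get a $(qn,2k,2l)$-family, invoke the Deza--Frankl theorem to obtain a $\Delta(qn,2k,2l)$-system, analyze the kernel block by block (using $|\mathcal{F}|>q-1$ exactly where the paper does, to kill the all-distinct-nonzero-symbols case), and read off $|K|=l$. The only difference is presentational: the paper classifies blocks by the intersection sizes $|I_i\cap \hat{D}|=|w_{x(i)}\cap w_{y(i)}|\in\{0,1,2\}$ via the identity $\hat{D}=\psi(x)\cap\psi(y)$, whereas you track membership of the flag coordinate $qi$ and the symbol coordinates in $\hat{D}$ directly --- the same argument in different clothing.
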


\begin{proof}
We utilize the map $\psi$ and the partition $\{I_i : i \in [n]\}$ of $[qn]$ defined previously. Applying $\psi$ to $\mathcal{F}$, we obtain a $(qn, 2k, 2l)$-family $\psi(\mathcal{F})$. Since $\psi$ is injective, $|\psi(\mathcal{F})| = |\mathcal{F}|$. If $|\mathcal{F}| > \max\{2l + 2, 4(k - l)^2 + 2(k - l) + 1\}$, then by Theorem~\ref{Theorem_deza_Delta_system}, $\psi(\mathcal{F})$ is a $\Delta(qn, 2k, 2l)$-system with some kernel $D \subseteq [qn]$ of size $|D| = 2l$.

For each $i \in [n]$, consider the projection of $\psi(\mathcal{F})$ onto $I_i$:
\[
\begin{aligned}
\pi_{I_i}(\psi(\mathcal{F})) 
&= \{\pi_{I_i}([w_{a(1)}, \ldots, w_{a(i-1)}, w_{a(i)}, w_{a(i+1)}, \ldots, w_{a(n)}]) : a \in \mathcal{F}\} \\
&= \{[0,\ldots,0,w_{a(i)},0,\ldots,0] : a \in \mathcal{F}\},
\end{aligned}
\]
where $w_{a(i)}$ is the $i$-th block of $\psi(a)$, corresponding to the coordinates in $I_i$. For any two distinct codewords $x, y \in \mathcal{F}$, since $\psi(\mathcal{F})$ is a $\Delta$-system with kernel $D$, we have
\[
D = \psi(x) \cap \psi(y).
\]
Thus,
\begin{equation}
\label{equality_of_local_kernel}
\pi_{I_i}(\psi(x)) \cap \pi_{I_i}(\psi(y)) = (\psi(x) \cap I_i) \cap (\psi(y) \cap I_i) = (\psi(x) \cap \psi(y)) \cap I_i = D \cap I_i.
\end{equation}

\begin{claim}
\label{claim_size_of_Ii_cap_D}
For each $i \in [n]$, either $|I_i \cap D| = 0$ or $|I_i \cap D| = 2$.
\end{claim}

\begin{proof}[Proof of Claim \ref{claim_size_of_Ii_cap_D}]
Let $x, y \in \mathcal{F}$ be any two distinct codewords. By \eqref{equality_of_local_kernel}, we have
\[
|I_i \cap D| = |\pi_{I_i}(\psi(x)) \cap \pi_{I_i}(\psi(y))|=|w_{x(i)} \cap w_{y(i)}| \leq 2,
\]
by the basic properties of the vectors $w_i$. We now rule out the case $|I_i \cap D| = 1$. Suppose $|I_i \cap D| = 1$ for some $i \in [n]$. Then, by \eqref{equality_of_local_kernel},
\[
|\pi_{I_i}(\psi(x)) \cap \pi_{I_i}(\psi(y))| = |I_i \cap D| = 1
\]
for any distinct $x, y \in \mathcal{F}$, which further implies that $\pi_{I_i}(\psi(x))$ and $\pi_{I_i}(\psi(y))$ are non-zero and distinct. Therefore, 
\[|\mathcal{F}|=|\{\pi_{I_i}(\psi(x)) : x \in \mathcal{F}\}|=|\{[0,\ldots,0,w_{a(i)},0,\ldots,0] : a \in \mathcal{F}\}| \leq q - 1.\]
However, this contradicts to the assumption that $|\mathcal{F}| > q - 1$. Hence, $|I_i \cap D| \neq 1$, and the claim follows.
\end{proof}

Define $K \subseteq [n]$ as the set of indices $i$ where $I_i$ intersects $D$ non-trivially, namely,
\[
K = \{i \in [n] : |I_i \cap D| = 2\}.
\]
We now show that $\mathcal{F}$ is a $\Delta_q(n, k, |K|)$-system with kernel $K$. For distinct codewords $x, y \in \mathcal{F}$:
\begin{itemize}
    \item If $i \in K$, then by \eqref{equality_of_local_kernel}, $|\pi_{I_i}(\psi(x)) \cap \pi_{I_i}(\psi(y))| = |I_i \cap D| = 2$. Thus, $|w_{x(i)} \cap w_{y(i)}| = 2$, implying $x(i) = y(i) \neq 0$.
    \item If $i \in [n] \backslash K$, then $|I_i \cap D| = 0$, so $|\pi_{I_i}(\psi(x)) \cap \pi_{I_i}(\psi(y))| = 0$. Thus, $|w_{x(i)} \cap w_{y(i)}| = 0$, implying either $x(i) = 0$ or $y(i) = 0$.
\end{itemize}

Finally, we compute
\[
|D| = \left|\bigcup_{j \in K} (D \cap I_j)\right| = \sum_{j \in K} |D \cap I_j| = \sum_{j \in K} 2 = 2|K|.
\]
Since $|D| = 2l$, we have $|K| = l$. Note that this also means that $l$ is an integer. Thus, $\mathcal{F}$ is a $\Delta_q(n, k, l)$-system with kernel $K$ of size $l$. The bound $|\mathcal{F}| \leq \frac{n - l}{k - l}$ follows from the properties of a $\Delta_q(n, k, l)$-system.
\end{proof}

\begin{corollary}
\label{corollary_bound_for_q_ary_equidistant_code}
Let $C \subseteq H_q^n$ be an equidistant code with distance $d$.
\begin{itemize}
    \item[(a)] If $C$ contains the all-zero codeword and $|C| > \max\{d^2 + d + 2, q\}$, then $d$ is even, and $C \backslash \{0\}$ is a $\Delta_q(n, d, d/2)$-system.
    \item[(b)] We have
    \begin{equation}
    \label{inequality_q_ary_equidistant_code_upper_bound}
        |C| \leq \max\left\{d^2 + d + 2, q,\left\lfloor \frac{2n}{d} \right\rfloor\right\}.
    \end{equation}
    Particularly, if $n \geq \max\left\{\frac{d(d^2 + d + 2)}{2}, \frac{dq}{2}\right\}$, then
    \[
    |C| \leq \left\lfloor \frac{2n}{d} \right\rfloor.
    \]
\end{itemize}
\end{corollary}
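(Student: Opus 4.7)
The plan is to mirror the binary argument in Corollary \ref{corollary_distant_dependent_bound_binary_case}, using Theorem \ref{theorem_q_ary_version_of_Deza_theorem} in place of the classical Deza theorem.

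For part (a), I would start from the assumption $0 \in C$. Since every other codeword $x \in C \setminus \{0\}$ satisfies $\mathrm{d_H}(x, 0) = d$, it has weight exactly $d$, so $C \setminus \{0\} \subseteq J_q^{n,d}$. Any two distinct nonzero codewords have Hamming distance $d$, which under the convention $|x \cap y|_q = k - \mathrm{d_H}(x,y)/2$ means they have common intersection value $l = d - d/2 = d/2$. Thus $C \setminus \{0\}$ is an $(n,d,d/2)_q$-family in the sense of Section \ref{section_generalization_of_Deza_theorem}. To invoke Theorem \ref{theorem_q_ary_version_of_Deza_theorem} with $k=d$ and $l=d/2$, I need to verify $|C \setminus \{0\}| > \max\{2l+2, 4(k-l)^2 + 2(k-l)+1, q-1\}$, which simplifies to $\max\{d+2, d^2+d+1, q-1\}$. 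Since $d^2+d+1 \ge d+2$ for all $d \ge 1$, this maximum equals $\max\{d^2+d+1, q-1\}$, and the hypothesis $|C| > \max\{d^2+d+2, q\}$ gives exactly $|C|-1 > \max\{d^2+d+1, q-1\}$. The conclusion of Theorem \ref{theorem_q_ary_version_of_Deza_theorem} then forces $l = d/2$ to be an integer (so $d$ is even) and $C \setminus \{0\}$ to be a $\Delta_q(n,d,d/2)$-system.

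For part (b), I would first reduce to the case $0 \in C$. This is justified by code equivalence: picking any codeword $c \in C$ and applying, independently in each coordinate $i$, the symbol permutation swapping $0$ and $c(i)$, produces an equivalent equidistant code of the same distance that contains the all-zero word. With this reduction, split into two cases. If $|C| \le \max\{d^2+d+2, q\}$, then \eqref{inequality_q_ary_equidistant_code_upper_bound} holds trivially. Otherwise, part (a) applies and $C \setminus \{0\}$ is a $\Delta_q(n, d, d/2)$-system, so by the cardinality bound in Theorem \ref{theorem_q_ary_version_of_Deza_theorem},
\[
|C| - 1 = |C \setminus \{0\}| \le \frac{n - d/2}{d - d/2} = \frac{2n}{d} - 1,
\]
giving $|C| \le 2n/d$, hence $|C| \le \lfloor 2n/d \rfloor$ by integrality. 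For the "particularly" clause, the assumption $n \ge d(d^2+d+2)/2$ yields $2n/d \ge d^2+d+2$ and $n \ge dq/2$ yields $2n/d \ge q$, so $\lfloor 2n/d \rfloor \ge \max\{d^2+d+2, q\}$, and the outer maximum in \eqref{inequality_q_ary_equidistant_code_upper_bound} collapses to $\lfloor 2n/d \rfloor$.

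The only delicate point is the WLOG reduction in (b); everything else is routine bookkeeping of the thresholds from Theorem \ref{theorem_q_ary_version_of_Deza_theorem}. The coordinate-wise symbol permutation is valid because the $q$-ary Hamming distance depends only on equality/inequality patterns, which are preserved under any permutation of the symbol alphabet applied independently in each coordinate. No deeper obstacle is expected.
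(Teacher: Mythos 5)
Your proposal is correct and follows essentially the same route as the paper's own proof: reduce to an $(n,d,d/2)_q$-family via the all-zero codeword, check the threshold $\max\{d+2,\,d^2+d+1,\,q-1\}$, and apply Theorem \ref{theorem_q_ary_version_of_Deza_theorem} to get the $\Delta_q(n,d,d/2)$-system and the bound $\frac{n-d/2}{d-d/2}=\frac{2n}{d}-1$. The only difference is that you explicitly justify the ``without loss of generality'' step in (b) via coordinate-wise symbol permutations, a detail the paper leaves implicit; this is a valid and welcome addition rather than a deviation.
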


\begin{proof}
\begin{itemize}
    \item[(a)] Assume $C \subseteq H_q^n$ is an equidistant code containing the all-zero codeword. Then $C \backslash \{0\} \subseteq J_q^{n,d}$ is an $(n,d,d/2)_q$-family. If $|C| > \max\{d^2 + d + 2, q\}$, then $|C \backslash \{0\}| > \max\{d+2, d^2 + d + 1, q-1\}$. Applying Theorem \ref{theorem_q_ary_version_of_Deza_theorem} to $C \backslash \{0\}$ with $k = d$ and $l = d/2$, we conclude that $l$ is an integer, $d = 2l$ is even, and $C \backslash \{0\}$ is a $\Delta_q(n,d,d/2)$-system. This proves (a).
    
    \item[(b)] Without loss of generality, assume $C$ contains the all-zero codeword. If $|C| \le \max\{d^2 + d + 2, q\}$, then \eqref{inequality_q_ary_equidistant_code_upper_bound} follows. If $|C| > \max\{d^2 + d + 2, q\}$, then by (a), $C \backslash \{0\}$ is a $\Delta_q(n,d,d/2)$-system. Thus,
    \[
    |C \backslash \{0\}| \leq \frac{n - d/2}{d - d/2} = \frac{2n}{d} - 1,
    \]
    which implies $|C| = |C \backslash \{0\}| + 1 \leq \frac{2n}{d}$. This proves \eqref{inequality_q_ary_equidistant_code_upper_bound}.
    
    If $n \geq \max\left\{\frac{d(d^2 + d + 2)}{2}, \frac{dq}{2}\right\}$, then $\frac{2n}{d} \geq \max\{d^2 + d + 2, q\}$. Thus, the bound simplifies to $|C| \leq \left\lfloor \frac{2n}{d} \right\rfloor$.
\end{itemize}
\end{proof}

The bound $|C| \leq \left\lfloor \frac{2n}{d} \right\rfloor$ is tight for large $n$, as shown by the optimal $\Delta_q(n, d/2, 0)$-system construction.

\section{Discussions}
\label{section_discussions}

In this section, we propose several open questions for further explorations.

\begin{question}
Are there other equidistant codes $C \subseteq H_q^n$ that attain the upper bound $(q-1)n + 1$, except those presented in Example \ref{example_equidistant_code_maximum_size}?
\end{question}

\begin{question}
Denote $f_q(k,l)$ the smallest integer $m$ such that any $(n,k,l)_q$-family of cardinality greater than $m$ forms a $\Delta_q(n,k,l)$-system. By Theorem~\ref{theorem_q_ary_version_of_Deza_theorem}, we know 
\[f_q(k,l)\le  4(k - l)^2 + 2(k - l) + 1,\]
when $\max\{2l + 2, q - 1\}\le  4(k - l)^2 + 2(k - l) + 1$. What is the exact value of $f_q(k,l)$?
\end{question}





\section*{Acknowledgment}
The authors thank Professor Chong Shangguan for informing us Deza’s theorem and for the valuable discussions. Sihuang Hu and Hexiang Huang are partially funded by National Key R\&D Program of China under Grant No. 2021YFA1001000, National Natural Science Foundation of China under Grant No. 12231014. Wei-Hsuan Yu is supported by NSTC of Taiwan, 113-2115-M-008-007-MY2.


\bibliographystyle{plain}
\bibliography{references}

{\small
\noindent State Key Laboratory of Cryptography and Digital Economy Security, Shandong University, Qingdao 266237, China \\
Key Laboratory of Cryptologic Technology and Information Security, Ministry of Education, Shandong University, Qingdao 266237, China \\
School of Cyber Science and Technology, Shandong University, Qingdao 266237, China \\
Quan Cheng Laboratory, Jinan 250103, China

\noindent\text{Email address:} {\href{mailto:husihuang@sdu.edu.cn}{\textcolor{black}{husihuang@sdu.edu.cn}}}

\vspace{0.2cm}
\noindent School of Cyber Science and Technology, Shandong
University, Qingdao 266237, China 

\noindent\text{Email address:} {\href{mailto:hexianghuang@foxmail.com}{\textcolor{black}{hexianghuang@foxmail.com}}}

\vspace{0.2cm}

\noindent Department of Mathematics, National Central University, Taoyuan 32001, Taiwan

\noindent\text{Email address:} {\href{mailto:u690604@gmail.com}
{\textcolor{black}{u690604@gmail.com}}}

During the preparation of this work the authors used Grok AI in order to improve the readability and language of the manuscript. After using this tool, the authors reviewed and edited the content as needed and take full responsibility for the content of the published article.
\end{document}